\newcommand{\C}{{\mathcal{C}}} 
\newcommand{\Csym}{{\mathcal{C}^\text{sym}}} 
\newcommand{\R}{{\mathbb{R}}} 
\newcommand{\N}{{\mathbb{N}}} 
\newcommand{\B}{\mathcal{B}} 
\DeclareMathOperator{\EX}{\mathbb{E}}
\newcommand{\D}{{\mathcal{D}}} 
\newcommand{\rom}[1]{\uppercase\expandafter{\romannumeral#1}} 
\newtheorem{Theorem}{Theorem}[section]
\newtheorem{Definition}[Theorem]{Definition} 
\newtheorem{Lemma}[Theorem]{Lemma}	
\newtheorem{Example}[Theorem]{Example}
\newtheorem{Remark}[Theorem]{Remark} 
\newtheorem{Proposition}[Theorem]{Proposition}
\def\ps@pprintTitle{%
  \let\@oddhead\@empty
  \let\@evenhead\@empty
  \def\@oddfoot{\reset@font\hfil\thepage\hfil}
  \let\@evenfoot\@oddfoot
}
\journal{}
\begin{document}

\begin{frontmatter}



\title{Revisiting the region determined by Spearman's $\rho$ and Spearman's footrule $\phi$}


\author[UniSbg]{Marco Tschimpke} \author[AMAG]{Manuela Schreyer} \author[UniSbg]{Wolfgang Trutschnig \corref{mycorrespondingauthor}}

\affiliation[UniSbg]{organization={University of Salzburg, Department for Artificial Intelligence \& Human Interfaces},
            addressline={Hellbrunnerstrasse 34}, 
            city={Salzburg},
            postcode={5020}, 
            state={Salzburg},
            country={Austria}}
\affiliation[AMAG]{organization={AMAG Austria Metall GmbH},
            addressline={Lamprechtshausenerstr 61}, 
            city={Ranshofen},
            postcode={5282}, 
            state={Upper Austria},
            country={Austria}}
\cortext[mycorrespondingauthor]{Corresponding author. Email address: wolfgang@trutschnig.net}

\begin{abstract}
Kokol and Stopar ($2023$) recently studied the exact region $\Omega_{\phi,\rho}$
determined by Spearman's footrule $\phi$ and Spearman's $\rho$ and derived a sharp lower, as well as a non-sharp 
upper bound for $\rho$ given $\phi$. Considering that the proofs for establishing these inequalities are 
novel and interesting, but technically quite involved we here provide alternative simpler proofs mainly building upon 
shuffles, symmetry, denseness and mass shifting. As a by-product of these proofs we derive several additional 
results on shuffle rearrangements and the interplay between diagonal copulas and shuffles which are of independent interest. 
Moreover we finally show that we can get closer to the (non-sharp) 
upper bound than established in the literature so far.   

\end{abstract}

\begin{keyword}
Copulas \sep Concordance \sep Shuffle \sep Markov kernel \sep Optimization 


\MSC 62H20 \sep 62H05 \sep 60J35
\end{keyword}

\end{frontmatter}


\section{Introduction}

A standard approach for quantifying the extent of concordance or, more generally, association of a pair $(X,Y)$ of random variables $X,Y$ is to consider different measures of (weak) concordance or association such as Spearman's $\rho$, Kendall's $\tau$, Gini's $\gamma$, Spearman's footrule $\phi$ or Blomqvist's $\beta$ 
(see \cite{scarsini1984measures, nelsen2007introduction}). 
Each of the just mentioned measures only depends on the dependence structure of $(X,Y)$, so in the case of continuous 
marginals all these measures are functions of the (unique) copula $C$ underlying $(X,Y)$. 
Given two measures of (weak) concordance $\kappa_1 $ and $\kappa_2$ a seemingly natural question 
is, how much the value of $\kappa_2$ can vary given the value of $\kappa_1$, or vice versa. 
In other words: one might naturally be interested in determining the region 
$$
\Omega_{\kappa_1,\kappa_2}:=\{(\kappa_1(C), \kappa_2(C)): C \in \C \},
$$
where $\C$ denotes the family of all bivariate copulas. The larger the portion of the rectangle 
$$\{\kappa_1(C): C \in \C \} \times \{\kappa_2(C): C \in \C \}$$ covered by $\Omega_{\kappa_1,\kappa_2}$, the more
different the measures $\kappa_1,\kappa_2$ may be considered.

The presumably most well known question in this context was, whether the inequality for Kendall's $\tau$ and 
Spearman's $\rho$ as established by Durbin and Stuart in \cite{durbin1951inversions} is sharp. 
This very question and some related ones were ans\-wered in \cite{schreyer2017exact}, where $\Omega_{\tau,\rho}$
was characterized and shown to be compact but not convex. 
Since then, various contributions have followed: Article \cite{kokol2023exact} studies the interrelations between Kendall's 
$\tau$ and Gini's $\gamma$ / Spearman's footrule $\phi$. The lower and upper bound for Spearman's 
$\rho$ (Gini's $\gamma$) given Spearman's footrule $\phi$ were established in 
\cite{bukovvsek2024exact} (\cite{bukovvsek2022exact}). Finally, \cite{nelsen2007introduction, bukovvsek2021spearman} cover 
the relations between Blomqvist's $\beta$ and all remaining measures of (weak) concordance (see Table \ref{TableIntro} 
for a quick overview). 

\begin{table}[H]
    \centering
    \begin{tabular}{|c| c c c c|} 
         \hline
          & $\tau$ & $\gamma$ & $\phi$ & $\beta$ \\ 
         \hline 
         $\rho$ & \textcolor{ForestGreen}{\checkmark} & \textcolor{red}{$\times$} & \textcolor{orange}{$-$} & \textcolor{ForestGreen}{\checkmark} \\ 
         $\tau$ &  & \textcolor{ForestGreen}{\checkmark} & \textcolor{ForestGreen}{\checkmark} & \textcolor{ForestGreen}{\checkmark} \\ 
         $\gamma$ &  &  & \textcolor{ForestGreen}{\checkmark} & \textcolor{ForestGreen}{\checkmark} \\ 
         $\phi$ &  &  &  & \textcolor{ForestGreen}{\checkmark} \\  [1ex] 
         \hline
    \end{tabular}
    \caption{Already studied pairs of measures of (weak) concordance, with the following nomenclature: 
    \textcolor{ForestGreen}{\checkmark}...exactly known region, \textcolor{orange}{$-$}...partially known region, 
    \textcolor{red}{$\times$}...unknown region.}
    \label{TableIntro}
\end{table}

In what follows we focus on $\Omega_{\phi,\rho}$, the region determined by Spearman's footrule 
$\phi$ and  Spearman's $\rho$. According to \cite{bukovvsek2024exact} the inequality
\begin{align}\label{PhiRhoRegionEq}
    \frac{2}{9} \sqrt{3} \left( 1 + 2\phi(C) \right)^{3/2} - 1 \leq \rho(C) \leq 1 - \frac{2}{3} \left(\phi(C) - 1 \right)^2
\end{align}
holds for every copula $C$. Furthermore (again see \cite{bukovvsek2024exact}), the lower bound in 
ineq. \eqref{PhiRhoRegionEq} is sharp while the upper bound is only known to be sharp in countably many points 
(with only accumulation point $(1,1)$). Main objective of our contribution is to show that both, the lower and 
the upper inequality in ineq. \eqref{PhiRhoRegionEq} can be established alternatively by proceeding similarly as in 
\cite{schreyer2017exact}. In fact, working with 
shuffles, symmetry and continuity, proving the right hand-side of \eqref{PhiRhoRegionEq} boils down 
to a straightforward application of classical Cauchy-Schwarz inequality 
(which, in turn, even provides a simple characterization for those shuffles, 
for which the inequality becomes an equality); and proving the lower (sharp) bound to a rearrangement 
property of integrals/sums (which holds in a very general setting). 
Apart from providing alternative simple proofs for ineq. \eqref{PhiRhoRegionEq} we also show that we can 
get closer to the upper bound than established in \cite{bukovvsek2024exact}.   

The rest of the contribution is organized as follows: Section \ref{SecNotation} introduces the necessary notation 
and preliminaries used in the sequel. Section \ref{SecUpperBound} provides two alternative simple proofs 
for the upper bound: one working with symmetric shuffles and the other one building upon maximality of 
diagonal copulas within the family of all symmetric copulas with given diagonal, and the interplay 
between diagonal copulas and symmetric shuffles. 
Section \ref{SecLowerBound} revisits the lower sharp inequality and derives it via the afore-mentioned 
novel rearrangement idea.
Finally, working with ordinal sums and `interpolations' of copulas,  
Section \ref{SecImprovementUpperBound} extends the known subset of $\Omega_{\phi,\rho}$.  
Se\-veral examples and graphics illustrate the main ideas and chosen approaches.

\section{Notation and preliminaries}\label{SecNotation}

Given an arbitrary metric space $(S,d)$, the Borel $\sigma$-field on $S$ will be denoted by $\B(S)$. 
Moreover, the one- and two dimensional Lebesgue measure (on $\mathcal{B}(\mathbb{R})$ and 
$\mathcal{B}(\mathbb{R}^2)$, respectively) will be denoted by $\lambda$ and $\lambda_2$,  
the Dirac measure in a point $a$ by $\Xi_a$. The class of all bivariate copulas is denoted by $\C$,
for every $C \in \C$ we will let $\mu_C$ denote the corresponding doubly stochastic 
measure. Prominent examples of copulas are the independent copula $\Pi$ and the lower and upper Fr\`echet-Hoeffding 
bounds $W$ and $M$. 
In the following $C^t$ will denote the transpose of a copula $C$, i.e., $C^t(u,v) = C(v,u)$ for all $u,v\in[0,1]$. 
A copula $C$ is called symmetric if $C = C^t$ holds. The uniform metric $d_\infty$ on $\C$ is defined by
$$
d_\infty(C, D) :=  \max_{(u,v) \in [0,1]^2} \vert C(u,v) - D(u,v) \vert.
$$
It is well known that $(\C, d_\infty)$ is a compact metric space (see \cite{durante2015principles}). 
For every measurable function 
$f:\R \to \R$ we set $\Vert f \Vert_\infty := \sup_{x \in \R} \vert f(x) \vert $.
For background on copulas and doubly stochastic measures we refer to 
\cite{nelsen2007introduction, durante2015principles}. 

A mapping $K: \R \times \B(\R) \to [0,1]$ is a called a Markov kernel if $x \mapsto K(x, F)$ is measurable for every set $F \in \B(\R)$ and $F \mapsto K(x,F)$ is a probability measure for every $x\in\R$. Given two random variables 
$X$ and $Y$ on a probability space $(\Omega, \B(\Omega), \mathbb{P})$ a Markov kernel is called regular conditional 
distribution of $Y$ given $X$ if
\begin{align*}
K(X(\omega), F) (\omega) = \EX\left( \mathds{1}_F \circ Y \vert X \right)(\omega)
\end{align*}
holds for every $F \in \B(\R)$ and $\mathbb{P}$-almost every $\omega \in \Omega$. It is well known that for every pair $(X,Y)$ 
the Markov kernel $K(x, \cdot)$ is unique for $\mathbb{P}^X$-almost every $x \in \mathbb{R}$. 
In the sequel we will write $(U,V) \sim C$ if $C$ is the distribution function of $(U,V)$ and $U,V$ are uniform on $[0,1]$. 
For every $C \in \C$ there exists a Markov kernel $K_C$ satisfying the disintegration property
\begin{align*}
    \mu_C(G) = \int_{[0,1]} K_C(u, G_u) d\lambda(u)
\end{align*}
for every $G\in \B([0,1]^2)$, where $G_u := \{ v \in [0,1]: (u,v) \in G \}$. For more information on disintegration 
and conditional expectations see \cite{kallenberg2002, klenke2008}; for more background on  
Markov kernels and their applications in the context of copulas  we refer to 
\cite{trutschnig2011strong, kasper2021weak, sfx2021vine}.

A measurable transformation $h: [0,1] \to [0,1]$ is called $\lambda$-preserving 
if $\lambda^h(E):=\lambda(h^{-1}(E)) = \lambda(E)$ holds for every $E \in \B([0,1])$, i.e., 
if the push forward $\lambda^h$ of $\lambda$ via $h$ coincides with $\lambda$.
A copula $C$ is said to be completely dependent if there exists a $\lambda$-preserving transformation 
$h: [0,1] \to [0,1]$ such that 
$K_C(x, F) = \mathds{1}_F (h(x))$ is a version of the Markov kernel of $C$. In other words: A copula
is called completely dependent if it allows a Markov kernel whose conditional distributions are all 
degenerated. For alternative equivalent defi\-nitions of complete dependence we refer to \citep{trutschnig2011strong}
and the references therein. 
For every $\lambda$-preserving transformation $h: [0,1] \rightarrow [0,1]$ we will let 
$C_h$ denote the corresponding (unique) copula and write $\C_d$ for the class of all 
completely dependent copulas. If a copula $C$ fulfills $C \in \C_d$ and $C^t \in \C_d$
we will refer to it as mutually completely dependent. It is straightforward to verify that 
the latter is the case if, and only if the corresponding $\lambda$-preserving transformation is 
bijective outside a set of $\lambda$-measure zero.

We call a $\lambda$-preserving transformation $h:[0,1] \to [0,1]$ a (classical) 
equidistant even shuffle (a.k.a. equidistant shuffle of $M$) with $N \in \mathbb{N}$ stripes if, and only if 
$h$ is linear with slope $1$ on each 
interval $I_N^i:=(\frac{i-1}{N}, \frac{i}{N})$, injective on $\bigcup_{i=1}^N (\frac{i-1}{N}, \frac{i}{N})$, and 
just permutes the intervals $I_N^1,\ldots,I_N^N$. In 
the sequel $\Sigma_N$ will denote the set of all permutations of the set $\{1, \dots, N\}$ and
$\mathcal{S}_N$ the family of all equidistant even shuffles with $N$ stripes.  
It is well-known and straightforward to check that for every $N \in \mathbb{N}$ 
there is a one-to-one correspondence between $\Sigma_N$ and $\mathcal{S}_N$ (see \cite{mikusinski1992shuffles}).
Emphasizing the permutation we will therefore frequently write $S_\pi \in \mathcal{S}_N$. 

In what follows we will work with the subclass of symmetric shuffles: A shuffle $S_\pi \in \mathcal{S}_N$ is called 
symmetric if, and only if the corresponding completely 
dependent copula $C_{S_\pi} \in \mathcal{C}_d$ is symmetric. It is straightforward to verify that 
$S_\pi \in \mathcal{S}_N$ is symmetric if, and only if the corresponding permutation $\pi \in \Sigma_N$ 
is an involution (a.k.a. self inverse), i.e., if $\pi(\pi(i))=i$ holds for every $i \in \{1,\ldots,N\}$. 
In accordance with shuffles in the sequel we will simply refer to self inverse permutations as symmetric.  
The subclass of all symmetric elements of $\mathcal{S}_N$ will be denoted by $\mathcal{S}_N^{sym}$. Furthermore, 
to simplify notation we will write 
$$
\mathcal{S} := \bigcup_{N\in\N} \mathcal{S}_N, \quad \mathcal{S}^{sym} := \bigcup_{N\in\N} \mathcal{S}_N^{sym}
$$
and refer to elements of $\mathcal{S}$ (or $\mathcal{S}^{sym} $) as shuffles (or symmetric shuffles). 
As commonly done in the literature we will also refer to the corresponding mutually completely
dependent copula $C_h$ as shuffle (or symmetric shuffle) and write 
\begin{equation}
\mathcal{C}_\mathcal{S} := \left\{C_h: \, h\in \mathcal{S} \right\}, \quad 
\mathcal{C}_{\mathcal{S}^{sym}} := \left\{C_h: \, h\in \mathcal{S}^{sym} \right\}.
\end{equation}

According to \cite{edwards2004measures} a mapping $\kappa: \C \to \R$ is called a measure of concordance if it satisfies the following properties:
\begin{enumerate}
    \item[(i)] $\kappa(M) = 1$;
    \item[(ii)] $\kappa(C^t) = \kappa(C)$ for all $C\in\C$;
    \item[(iii)] $\kappa(C^\nu) = -\kappa(C)$ for all $C\in\C$ where $C^\nu$ 
    is the reflection of $C$ at $u=\frac{1}{2}$, i.e., $C^\nu(u,v) := v - C(1-u,v) $
    \item[(iv)] $\kappa(C) \leq \kappa(D)$ whenever $C\leq D$,
    i.e., whenever $C$ and $D$ are ordered pointwise;
    \item[(v)] $\lim_{n \to \infty} \kappa (C_n) = \kappa(C)$ for any sequence $(C_n)_{n \in \N}$ of 
    copulas converging to $C\in\C$.
\end{enumerate}

It is well-known (see \cite{nelsen2007introduction}) that Spearman's $\rho$ and Spearman's footrule $\phi$ 
can be expressed in terms of the underlying copula as follows:
\begin{align}
    \rho(C) &= 12 \int_{[0,1]^2} C(u,v) d\mu_{\Pi}(u,v) - 3 =  12 \int_{[0,1]^2} C(u,v) d\lambda_2(u,v) - 3 \label{DefRho} \\
    \phi(C) &= 6 \int_{[0,1]^2} C(u,v) d\mu_{M}(u,v) - 2 = 6 \int_{[0,1]} C(u,u) d\lambda(u) - 2 \label{DefPhi}
\end{align}
Considering $\phi(M^\nu)=\phi(W) = - \frac{1}{2} \neq -1 = - \Phi(M)$ 
Spearman's footrule $\phi$ is only a weak measure of concordance. To simplify notation, for $S_\pi \in \mathcal{S}_N$ 
we will also write $\rho(S_\pi):=\rho(C_{S_\pi})$ as well as $\phi(S_\pi):=\phi(C_{S_\pi})$ in the sequel. \\

Finally, denoting the diagonal of a copula $C$ by $\delta_C$, i.e., $\delta_C(t) := C(t,t)$ for every $t\in[0,1]$, 
it is well known (see \cite{sanchez2016some}) that $\delta_C$ satisfies the following properties:
\begin{itemize}
    \item $\delta_C(0) = 0$ and $\delta_C(1) = 1$,
    \item $\delta_C$ is monotonically non-decreasing,
    \item $\delta_C$ is Lipschitz continuous with Lipschitz constant $L = 2$ and
    \item $\delta_C(t) \leq t$ for all $t\in[0,1]$.
\end{itemize}
In the sequel $\D$ denotes the family of all diagonals of copulas (which is well known to coincide 
with the class of all functions
$\delta: [0,1] \rightarrow [0,1]$ fulfilling the afore-mentioned four points). 
For every $\delta \in \D$, setting $\Hat{\delta}(t) := t - \delta(t)$ for all $t\in[0,1]$ it 
follows that both $\delta$ and $\Hat{\delta}$ are differentiable $\lambda$-almost everywhere (see \cite{rudin1987real}), 
hence there exist some measurable functions $w_\delta: [0,1] \to [0,2]$ and $\Hat{w}_\delta: [0,1] \to [-1,1]$ 
with $w_\delta(x) = \delta'(x) $ as well as $\Hat{w}_\delta(t) = 1 - \Hat{\delta}'(t) $ for $\lambda$-almost every $t\in[0,1]$.
We will refer to $w_\delta$ and $\Hat{w}_\delta$ as measurable versions of the derivative of $\delta$ and $\hat{\delta}$, 
respectively.
\clearpage

\section{Novel proofs for the upper bound}\label{SecUpperBound}

\noindent We first tackle the upper bound 
\begin{align}\label{UpperBound}
    \rho(C) \leq 1 - \frac{2}{3} \left(\phi(C) - 1 \right)^2
\end{align}
going back to \cite{bukovvsek2024exact} and established by working with diagonal copulas, sufficiently smooth diagonals 
and Bernstein approximations (the technically quite involved Lemma 9 being key). 
We provide two simple alternative proofs, one purely based on Cauchy-Schwarz inequality, and
the other one using maximality properties of diagonal copulas.  

\subsection{A simple proof via symmetric shuffles and Cauchy-Schwarz inequality}
Building upon the fact that for every copula $C$ the symmetric copula $C^\ast := \frac{1}{2} (C + C^t)$ fulfills $\rho(C^\ast) = \rho(C)$ and $\phi(C^\ast) = \phi(C)$, it suffices to prove ineq. \eqref{UpperBound} for the class 
$\Csym$ of all symmetric copulas. Moreover, using continuity of $\rho$ and $\phi$ w.r.t $d_\infty$ 
we can further reduce the problem to any dense subclass of the family $\Csym$. 
The following lemma will therefore be key. 

\begin{Lemma}\label{DenseSymShuffles}
    The family of all symmetric shuffles $\mathcal{C}_{\mathcal{S}^{sym}}$ is dense in $(\Csym, d_\infty)$. 
\end{Lemma}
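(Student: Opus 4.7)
The plan is to approximate a given $C \in \Csym$ in two stages: first by a symmetric bivariate checkerboard copula at a coarse scale, then by a symmetric equidistant shuffle at a much finer scale, keeping symmetry at every step. Simply invoking the classical density of $\mathcal{C}_\mathcal{S}$ in $(\mathcal{C},d_\infty)$ and then symmetrising does not work, because $\tfrac{1}{2}(C_h + C_h^t)$ for a shuffle $C_h$ is generally not itself a shuffle. A direct construction respecting the involutory structure is therefore needed.

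Given $\varepsilon>0$ and $C \in \Csym$, fix $N \in \mathbb{N}$ and let $C^{(N)}$ denote the bivariate checkerboard approximation of $C$ whose doubly stochastic measure spreads the cell mass $p_{i,j} := \mu_C(I_N^i \times I_N^j)$ uniformly over $I_N^i \times I_N^j$. Symmetry of $C$ forces $p_{i,j}=p_{j,i}$, so $C^{(N)} \in \Csym$, and a standard estimate yields $d_\infty(C^{(N)},C) \to 0$ as $N \to \infty$. It therefore suffices to approximate an arbitrary symmetric checkerboard by elements of $\mathcal{C}_{\mathcal{S}^{sym}}$. For large $K \in \mathbb{N}$ I would approximate the symmetric doubly stochastic matrix $(NK\,p_{i,j})_{i,j}$ by a symmetric non-negative integer matrix $(k_{i,j})_{i,j}$ with common row and column sum $K$; any asymmetric rounding discrepancy produced by a greedy rounding can be eliminated by a single compensating symmetric swap, so such a rational approximation always exists.

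The crucial step is then to realise $(k_{i,j})$ as the incidence matrix of an involutory permutation $\pi \in \Sigma_{NK}$. For each pair $i<j$ pick $k_{i,j}$ fine sub-columns inside coarse column $i$ and the matching $k_{j,i}=k_{i,j}$ sub-columns inside coarse column $j$, and pair them off as transpositions of the associated fine sub-stripes; on the diagonal, assign the remaining $k_{i,i}$ positions inside coarse block $(i,i)$ to any involution of a $k_{i,i}$-element set, using fixed points when $k_{i,i}$ is odd. The symmetry $k_{i,j}=k_{j,i}$ guarantees that the off-diagonal pairings have matching cardinality, and it leaves exactly $k_{i,i}$ residual positions per diagonal block, so $\pi$ is automatically self-inverse, hence $S_\pi \in \mathcal{S}_{NK}^{sym}$ and $C_{S_\pi} \in \mathcal{C}_{\mathcal{S}^{sym}}$. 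The coarse-cell masses of $C_{S_\pi}$ agree with $k_{i,j}/(NK)$, and a routine argument estimating both the grid-point discrepancy and the within-cell variation gives $d_\infty(C_{S_\pi},C^{(N)}) = O\bigl(1/N + N^2/K\bigr)$; picking $K$ much larger than $N^3$ and then $N$ large completes the approximation.

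The main technical obstacle is the combinatorial realisation of $\pi$ in the third step. I do not expect a genuine obstruction here, precisely because the symmetry condition $k_{i,j}=k_{j,i}$ forces the off-diagonal counts into naturally matched pairs that can be realised as transpositions, while the diagonal counts are always absorbable by an elementary involution on a set of the appropriate size (with fixed points covering odd parity). The symmetric doubly stochastic structure of $(k_{i,j})$ is thus exactly strong enough for the construction to go through, and no further combinatorial input is needed.
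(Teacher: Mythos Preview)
Your proposal is correct and follows essentially the same strategy that the paper invokes: the paper's proof is a one-line pointer to the classical Mikusi\'nski--Sherwood--Taylor construction, observing that when applied to a symmetric copula the resulting approximating shuffle is automatically symmetric. You have effectively unpacked that construction (checkerboard approximation followed by a shuffle realisation) and made the symmetry-preservation explicit at each stage rather than deferring to the reference.

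One small point: your rounding step (``any asymmetric rounding discrepancy \ldots can be eliminated by a single compensating symmetric swap'') is a bit loose as stated. A cleaner way to obtain a symmetric non-negative integer matrix $(k_{i,j})$ with all row sums equal to $K$ is to round each $NK\,p_{i,j}$ down symmetrically (set $k'_{i,j}=k'_{j,i}=\lfloor NK\,p_{i,j}\rfloor$ for $i\le j$) and then absorb the row deficits $d_i:=K-\sum_j k'_{i,j}\in\{0,\ldots,N\}$ entirely on the diagonal by setting $k_{i,i}=k'_{i,i}+d_i$; symmetry and the row-sum constraint are then exact, and the entrywise error is $O(1/K)$ off the diagonal and $O(N/(NK))=O(1/K)$ on it. Alternatively, simply note that the polytope of symmetric doubly stochastic matrices is cut out by rational linear constraints, so its rational points are dense and one may assume the checkerboard masses are rational from the outset. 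Either route closes the gap, and the involution construction in your third step then goes through exactly as you describe.
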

\begin{proof}
Looking into the proof of the corresponding result for the full class $(\C, d_\infty)$ 
in \cite{mikusinski1992shuffles} reveals the fact
that, starting with an arbitrary symmetric copula $C$ the constructed approximating shuffle is symmetric 
as well. In other words: the original proof directly yields Lemma \ref{DenseSymShuffles}.
\end{proof}

Next we derive handy formulas for Spearman's $\rho$ and Spearman's footrule $\phi$ for 
$C_{S_\pi}$ with $S_\pi \in \mathcal{S}_N^{sym}$ and work with the following sets:  
\begin{eqnarray}
  I_\pi^- &:=& \{i \in \{1, \dots, N\}: \pi(i) < i \}, \quad I_\pi^0 := \{i \in \{1, \dots, N\}: \pi(i) = i \}, \nonumber \\
     I_\pi^+ &:=& \{i \in \{1, \dots, N\}: \pi(i) > i \}.
\end{eqnarray}   
We will only write $I^-, I^0$ and $I^+$ whenever no confusion can arise.
Notice that symmetry of $S_\pi \in \mathcal{S}_N^{sym}$ implies that 
(i) $i \in I^- $ if, and only if $\pi(i) \in I^+$ and that (ii) $i \in I^+ $ if, and only if $\pi(i) \in I^-$.

\begin{Lemma}\label{FormulasRhoPhi}
    For every $N \in \N$ and $S_\pi \in \mathcal{S}_N^{sym}$ the following identities hold:
    \begin{align}
        \rho(S_\pi) &= 1 - \frac{12}{N} \sum_{i \in I^-} \left( \frac{i - \pi(i)}{N} \right)^2, \label{Rho} \\
        \phi(S_\pi) &= 1 - \frac{6}{N} \sum_{i \in I^-} \frac{i - \pi(i)}{N}. \label{Phi}
    \end{align}
\end{Lemma}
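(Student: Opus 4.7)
The plan is to exploit the mutually completely dependent structure of a shuffle $S_\pi$: by definition $h(u) = u + (\pi(i)-i)/N$ on each stripe $I_N^i$, and if $(U,V) \sim C_{S_\pi}$ then $V = h(U)$ with $U$ uniform on $[0,1]$. This reduces both formulas to one-dimensional integrals over the $N$ stripes, after which the involution property of $\pi$ is used to fold the full sum over $\{1,\dots,N\}$ down to a sum over $I^-$ only.

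First I would derive \eqref{Phi}. Using the identity $C(u,u) = \mathbb{P}(U \le u, V \le u)$ and Fubini, one obtains $\int_0^1 C(u,u)\,d\lambda(u) = 1 - \EX[\max(U,V)]$; writing $\max(U,V) = \tfrac{1}{2}(U+V+|U-V|)$ and using that $h(U)$ is uniform gives the well known formula $\phi(C_h) = 1 - 3\,\EX|U-h(U)|$. On the stripe $I_N^i$ the integrand $|u-h(u)|$ is constant equal to $|i-\pi(i)|/N$, hence
\begin{align*}
\EX|U-h(U)| = \frac{1}{N^2}\sum_{i=1}^N |i-\pi(i)|.
\end{align*}
Since $\pi$ is an involution, $i \mapsto \pi(i)$ is a bijection $I^- \to I^+$ preserving $|i-\pi(i)|$, so the sum equals $2\sum_{i\in I^-}(i-\pi(i))$, yielding \eqref{Phi}.

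For \eqref{Rho} I would start from $\rho(C) = 12\,\EX(UV) - 3$ and compute
\begin{align*}
\EX(U\,h(U)) = \sum_{i=1}^N \int_{(i-1)/N}^{i/N} u\Bigl(u + \tfrac{\pi(i)-i}{N}\Bigr)\,d\lambda(u) = \frac{1}{3} + \frac{1}{2N^3}\sum_{i=1}^N (\pi(i)-i)(2i-1).
\end{align*}
The key step is to collapse the last sum using the involution symmetry. Splitting into $I^-$, $I^+$ (the contribution of $I^0$ vanishes since $\pi(i)=i$) and pairing each $j\in I^-$ with $i = \pi(j) \in I^+$ transforms the $I^+$ part into $\sum_{j\in I^-}(j-\pi(j))(2\pi(j)-1)$. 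Combining with the $I^-$ part the factor $2i-1$ cancels and one is left with $(\pi(i)-i)\bigl[(2\pi(i)-1)-(2i-1)\bigr] = -2(i-\pi(i))^2$. Plugging this back gives exactly \eqref{Rho}.

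The routine calculation is clean; the one place to be careful is the bookkeeping in the $I^-\!/I^+$ pairing for $\rho$, where the weight $2i-1$ depends on $i$ and does \emph{not} cancel termwise but only after combining the two pieces under the involution. Once that algebraic identity $(\pi(i)-i)(2i-1)+(i-\pi(i))(2\pi(i)-1) = -2(i-\pi(i))^2$ is observed, both formulas follow immediately.
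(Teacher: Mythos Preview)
Your proof is correct and follows essentially the same route as the paper: reduce to one-dimensional integrals over the $N$ stripes using $V=h(U)$, then use the involution $\pi\circ\pi=\mathrm{id}$ to pair $I^+$ with $I^-$ and collapse the full sum. The only cosmetic differences are that the paper starts from the copula-integral forms $\int M(u,h(u))\,d\lambda$ and $\int \Pi(u,h(u))\,d\lambda$ (via the interchange $\int C\,d\mu_M=\int M\,d\mu_C$) rather than the equivalent probabilistic identities $\phi=1-3\,\EX|U-V|$ and $\rho=12\,\EX(UV)-3$, and that for $\rho$ the paper first uses $\sum_i(i-\pi(i))=0$ to replace the weight $2i-1$ by $2i$ before pairing, whereas you pair directly with the weight $2i-1$; the resulting algebraic identity is the same.
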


\begin{proof}
    Fix  $N \in \N$ and $S_\pi \in \mathcal{S}_N$. Then obviously (a version of) the Markov kernel 
    is given by $K_{S_\pi}(x,F) = \mathds{1}_F(S_\pi(x))$ with 
    $$
    S_\pi(x) = \sum_{i=1}^N \underbrace{\left(x - \frac{i - \pi(i)}{N}\right)}_{=:h_i(x)}\mathds{1}_{I^i_N}(x).
    $$  
    Using disintegration therefore yields (for a justification for the interchange of the order of integration see, e.g., 
    \cite{edwards2004measures}) 
       \begin{align*}
        \frac{\phi(S_\pi) + 2}{6} &= \int_{[0,1]^2} S_\pi(u,v) d\mu_M(u,v) = 
        \int_{[0,1]^2} M(u,v) d\mu_{S_\pi}(u,v) \\&= \int_{[0,1]} M(u,S_\pi(u)) d\lambda(u) = 
        \sum_{i=1}^N \int_{I^i_N} M(u,h_i(u)) d\lambda(u) \\&=
        \sum_{i: \pi(i) \geq i}  \int_{I^i_N} u d\lambda(u) + \sum_{i: \pi(i) < i}  \int_{I^i_N} u - \frac{i - \pi(i)}{N} d\lambda(u) \\&=
        \sum_{i=1}^N \int_{I^i_N} u d\lambda(u) - \frac{1}{N} \sum_{i: \pi(i) < i} \frac{i - \pi(i)}{N} \\&=
        \frac{1}{2} - \frac{1}{N} \sum_{i \in I^-} \frac{i - \pi(i)}{N}.
    \end{align*}
    Proceeding analogously for Spearman's $\rho$ we get 
    \begin{align*}
        \frac{\rho(S_\pi) + 3}{12} &= \int_{[0,1]^2} S_\pi(u,v) d\mu_\Pi(u,v) = \int_{[0,1]^2} \Pi(u,v) d\mu_{S_\pi}(u,v) \\&= \int_{[0,1]} \Pi(u,S_\pi(u)) d\lambda(u) = 
        \sum_{i=1}^N \int_{I^i_N} \Pi(u,h_i(u)) d\lambda(u) \\&= \sum_{i=1}^N \int_{I^i_N} u^2 - u \,\frac{i - \pi(i)}{N} d\lambda(u) \\&=
        \int_{[0,1]} u^2 d \lambda(u) - \sum_{i=1}^N  \frac{i - \pi(i)}{N} \frac{i^2 - (i-1)^2}{2N^2} 
        \\&= \frac{1}{3} - \sum_{i=1}^N \frac{i - \pi(i)}{N} \frac{2i-1}{2N^2}.
    \end{align*}
    Since $\pi \in \Sigma_N$ we have $ \sum_{i=1}^N (i - \pi(i)) = 0$, which altogether yields
       \begin{align*}
        \rho(S_\pi) = 1 -6 \sum_{i=1}^N \frac{i - \pi(i)}{N} \frac{2i-1}{N^2} = 1 - \frac{12}{N} \sum_{i=1}^N \frac{i(i -\pi(i))}{N^2}.
    \end{align*}   
    Finally, using symmetry of $S_\pi$ the last summand simplifies to 
    \begin{align*}
        \sum_{i=1}^N i(i - \pi(i)) &= \sum_{i \in I^-} i(i - \pi(i)) + \sum_{i \in I^0} i(i - \pi(i)) + \sum_{i \in I^+} i(i - \pi(i)) \\&=
        \sum_{i \in I^-} i(i - \pi(i)) + \sum_{j \in I^-} \pi(j)(\pi(j) - \pi(\pi(j))) \\&=
        \sum_{i \in I^-} i(i - \pi(i)) + \pi(i)(\pi(i) - i) =\sum_{i \in I^-} (i - \pi(i))^2,
    \end{align*}   
    which completes the proof.
\end{proof}

According to Lemma \ref{FormulasRhoPhi}, the values of the permutation $\pi$ on the set $I^-$ contain 
all relevant information for calculating $\rho$ and $\phi$. 
This simple observation opens the door for applying Cauchy-Schwarz inequality and deriving a very simple proof 
for the upper inequality ($\#I^- $ denoting the cardinality of $I^-$): 
\begin{Theorem}\label{UpperboundShuffle}
    For every $N\in\N$  and every symmetric shuffle $S_\pi \in \mathcal{S}_N^{sym}$ the inequality 
    \begin{align}\label{UpperboundShuffleEq}
    \rho(S_N) \leq 1 - \frac{2}{3} \left(1 - \phi(S_N)\right)^2
    \end{align}
    holds. Furthermore we have equality in \eqref{UpperboundShuffleEq} if, and only if $i \mapsto \pi(i)-i$ 
    is constant on $I^-$ and $\# I^- = \frac{N}{2}$ holds.
\end{Theorem}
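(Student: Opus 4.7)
The plan is to apply Cauchy--Schwarz directly to the two sums appearing in Lemma \ref{FormulasRhoPhi}. Setting $a_i := i - \pi(i) > 0$ for $i \in I^-$ and $k := \# I^-$, Lemma \ref{FormulasRhoPhi} rewrites as
$$
1 - \rho(S_\pi) = \frac{12}{N^3}\sum_{i \in I^-} a_i^2, \qquad 1 - \phi(S_\pi) = \frac{6}{N^2}\sum_{i \in I^-} a_i.
$$
So the target inequality \eqref{UpperboundShuffleEq} is equivalent to $1 - \rho(S_\pi) \geq \tfrac{2}{3}(1-\phi(S_\pi))^2$, which after clearing constants reads
$$
\left(\sum_{i \in I^-} a_i\right)^{\!2} \;\leq\; \frac{N}{2}\sum_{i \in I^-} a_i^2.
$$

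The first step is to apply the classical Cauchy--Schwarz inequality to the vector $(a_i)_{i \in I^-}$ against the all-ones vector of length $k$, obtaining $(\sum_{i \in I^-} a_i)^2 \leq k \sum_{i \in I^-} a_i^2$. The second step, which is where symmetry of $S_\pi$ enters, is to observe that $\pi$ being an involution forces a bijection between $I^-$ and $I^+$ (since $i \in I^-$ iff $\pi(i) \in I^+$, as noted before Lemma \ref{FormulasRhoPhi}). Hence $\# I^- = \# I^+$, and together with $\# I^- + \# I^0 + \# I^+ = N$ this yields $k \leq N/2$. Combining the two bounds gives the required estimate.

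For the equality characterization, equality in Cauchy--Schwarz with the all-ones vector holds iff the map $i \mapsto a_i = i - \pi(i)$ (equivalently $i \mapsto \pi(i) - i$) is constant on $I^-$, while equality in $k \leq N/2$ holds iff $I^0 = \emptyset$, that is $\# I^- = N/2$. Since the two factors enter multiplicatively in the chain of inequalities, equality in \eqref{UpperboundShuffleEq} is equivalent to both conditions holding simultaneously, which is exactly the stated characterization.

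No real obstacle is expected: once the two identities from Lemma \ref{FormulasRhoPhi} are in hand, the proof is essentially a one-line Cauchy--Schwarz together with the elementary counting argument $\# I^- \leq N/2$ forced by the involution property. The only point to be slightly careful about is that the Cauchy--Schwarz bound alone produces the constant $N/(3k)$ rather than $2/3$, so the involution-based bound $k \leq N/2$ must be invoked explicitly to sharpen it, and to make clear why both conditions are necessary for equality.
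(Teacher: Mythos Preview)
Your proof is correct and is essentially identical to the paper's own argument: both apply Cauchy--Schwarz to the pair $(a_i)_{i\in I^-}$ against the constant vector, then invoke the involution bound $\#I^-\leq N/2$, and read off the equality conditions from the two steps. The only cosmetic difference is that the paper explicitly disposes of the trivial case $I^-=\emptyset$ at the outset before running the argument; you may wish to do the same so that the ``multiplicative'' reasoning in your equality discussion is airtight (when $I^-=\emptyset$ both sides vanish regardless of whether $k=N/2$).
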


\begin{proof}
Since the result is obvious for $I^-=\emptyset$ it suffices to consider $I^-\neq \emptyset$.
Cauchy-Schwarz inequality implies 
    \begin{align}\label{FormulasRhoPhiProof}
        \sum_{i\in I^-} \frac{i - \pi(i)}{N} \leq (\#I^-)^{1/2} \left(  \sum_{i \in I^-} \left( \frac{i - \pi(i)}{N} \right)^2 \right)^{1/2},
    \end{align}
    so using eqs. \eqref{Rho} and \eqref{Phi} shows
    \begin{align}\label{eq:temp}
    \left( \frac{1 - \phi(S_N)}{6} N \right)^2 \leq (\#I^-)^{1/2}  \left( \frac{1-\rho(S_N)}{12} N \right) 
    \leq \frac{N}{2} \frac{1-\rho(S_N)}{12}\,N,
    \end{align}
    which directly yields the desired inequality $\rho(S_N) \leq 1 - \frac{2}{3} \left(1 - \phi(S_N) \right)^2$.  \\
    Turning toward sharpness, obviously ineq. \eqref{FormulasRhoPhiProof} is sharp if, and only if the mapping
    $i \mapsto\frac{i - \pi(i)}{N}$ is constant on $I^-$. Furthermore the second part of ineq. \eqref{eq:temp} is 
    sharp if, and only if $\#I^- = \frac{N}{2}$. 
    \end{proof}

\noindent For an even $n \in\N$ let $S_{\pi^\ast} \in \mathcal{S}_n^{sym}$ denote the shuffle corresponding to  
$\pi^\ast \in \Sigma_n$ fulfilling 
$i - \pi^\ast(i) = 1$ for every $i \in I^- =  \{i \in \{1, \dots, n\}: i \text{ even}\}$; for the case $n=6$ see 
Figure \ref{FigShuffleUpperBound}. 
Then obviously the mapping $i \mapsto\frac{i - \pi^\ast(i)}{N}$ is constant on $I^-$, so 
we already know from Theorem \ref{UpperboundShuffleEq} that ineq. (\ref{UpperboundShuffleEq}) becomes
an equality (compare with Example 12 in \cite{bukovvsek2024exact}). 

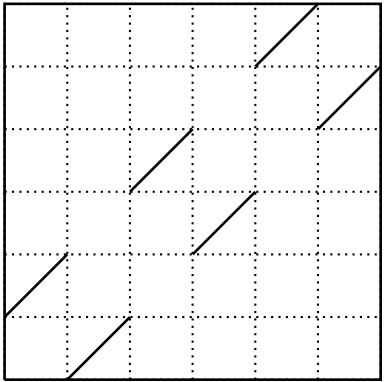
\begin{figure}[H]
    \centering
	\begin{tikzpicture}[scale=5]
	\draw[-,line width=1] (0,0) -- (0,1) -- (1,1) -- (1,0) -- (0,0);
        \draw[-,line width=1] (0,1/6) -- (1/6,2/6);
        \draw[-,line width=1] (1/6,0) -- (2/6,1/6);
        \draw[-,line width=1] (2/6,3/6) -- (3/6,4/6);
        \draw[-,line width=1] (3/6,2/6) -- (4/6,3/6);
        \draw[-,line width=1] (4/6,5/6) -- (5/6,6/6);
        \draw[-,line width=1] (5/6,4/6) -- (6/6,5/6);

        \draw[step=1/6,black,thick,xshift=0cm,yshift=0cm,dotted] (0,0) grid (1,1);
        
	\end{tikzpicture}
	\caption{The shuffle $S_{\pi^\ast} \in \mathcal{S}_6^{sym}$ for which ineq. (\ref{UpperboundShuffleEq}) 
	becomes an equality.} \label{FigShuffleUpperBound}
\end{figure}

Combining Theorem \ref{UpperboundShuffle} and Lemma \ref{DenseSymShuffles} already yields the following 
inequality for all copulas: 
\begin{Theorem}[\cite{bukovvsek2024exact}]\label{ThmUpperBound}
    For every copula $C\in\C$ the following inequality holds:
    \begin{align*}
        \rho(C) \leq 1 - \frac{2}{3} \left( 1 - \phi(C) \right)^2.
    \end{align*}
\end{Theorem}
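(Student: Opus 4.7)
The plan is to combine three ingredients already assembled in the excerpt: symmetrization (which preserves both $\rho$ and $\phi$), the density statement in Lemma \ref{DenseSymShuffles}, and the shuffle-level inequality in Theorem \ref{UpperboundShuffle}. The argument proceeds by reduction: from arbitrary copulas to symmetric copulas, then from symmetric copulas to symmetric shuffles, and finally by a limit argument.

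First, I would reduce to $\Csym$. Given any $C \in \C$, set $C^\ast := \tfrac{1}{2}(C + C^t)$, which is a symmetric copula. The linearity of the integrals in \eqref{DefRho} and \eqref{DefPhi}, combined with the symmetry of $\mu_\Pi$ under swapping coordinates and the fact that $\mu_M$ is concentrated on the diagonal (where $C$ and $C^t$ agree), yields $\rho(C^\ast) = \rho(C)$ and $\phi(C^\ast) = \phi(C)$. Hence it suffices to establish the inequality on $\Csym$.

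Second, fix $C \in \Csym$. By Lemma \ref{DenseSymShuffles} there is a sequence of symmetric shuffles $C_{S_{\pi_n}} \in \mathcal{C}_{\mathcal{S}^{sym}}$ with $d_\infty(C_{S_{\pi_n}}, C) \to 0$. Theorem \ref{UpperboundShuffle} applied to each term gives
\begin{align*}
\rho(C_{S_{\pi_n}}) \;\leq\; 1 - \frac{2}{3}\bigl(1 - \phi(C_{S_{\pi_n}})\bigr)^{2}.
\end{align*}
Passing to the limit requires continuity of both sides with respect to $d_\infty$. For $\rho$ this is property (v) in the definition of a measure of concordance; for $\phi$ it follows at once from the diagonal representation in \eqref{DefPhi}, since $|C_{S_{\pi_n}}(u,u) - C(u,u)| \leq d_\infty(C_{S_{\pi_n}}, C) \to 0$ uniformly in $u$, so dominated convergence yields $\phi(C_{S_{\pi_n}}) \to \phi(C)$.

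There is no genuine obstacle: the theorem is a clean consequence of the shuffle-level bound plus the two reductions, and every step is already prepared in the preceding sections. The only piece that is not invoked verbatim is the $d_\infty$-continuity of $\phi$, which is an immediate application of dominated convergence to \eqref{DefPhi} and can be stated in a single line.
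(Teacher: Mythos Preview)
Your proposal is correct and follows essentially the same approach as the paper: reduce to $\Csym$ via symmetrization, invoke Lemma~\ref{DenseSymShuffles} to approximate by symmetric shuffles, apply Theorem~\ref{UpperboundShuffle} to each shuffle, and pass to the limit using the $d_\infty$-continuity of $\rho$ and $\phi$. The paper states this combination in a single sentence, while you spell out the continuity of $\phi$ via dominated convergence, but the argument is the same.
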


\subsection{A second alternative proof via diagonal copulas and their interrelation with shuffles}

It is well-known (see \cite{sanchez2016some, ubeda2008best, nelsen2004best}) that, given a diagonal $\delta \in \mathcal{D}$, 
the diagonal copula $E_\delta$, given by 
    \begin{align*}
        E_\delta(u,v) = \min \Big\{ u, v, \frac{\delta(u) + \delta(v)}{2} \Big\}
    \end{align*}
for all $u,v \in [0,1]$ is the maximal element in the class of all copulas with diagonal $\delta$, i.e., 
$C \leq E_\delta$ for all copulas $C\in\Csym$ with diagonal $\delta\in\D$. 

Before proceeding with the alternative proof of Theorem \ref{ThmUpperBound} we recall some properties 
of diagonal copulas going back to \cite{sanchez2016some}), which will prove useful in the sequel.
Obviously the mapping $\iota: (\mathcal{D},\Vert \cdot \Vert_\infty) \rightarrow (\mathcal{C},d_\infty)$, defined by 
$\iota(\delta)=E_\delta$ is continuous. Moreover, for a given diagonal $\delta \in\D$ set $g(t) := 2t - \delta(t)$ and define
\begin{align*}
    L(t) := \min \{ z\in[0,1]: g(z) \geq \delta(t) \} \text{ and } U(t) := \min \{ z\in[0,1]: \delta(z) \geq g(t) \}.
\end{align*}
It is straightforward to verify that both $L$ and $U$ are non-decreasing and that 
$L(t) \leq t \leq U(t)$ holds for every $t\in[0,1]$. 
Furthermore, it can be shown that the diagonal copula $E_\delta$ distributes its mass on the graphs of the 
functions $L$ and $U$. More precisely, the following result holds:
\begin{Proposition}[\cite{sanchez2016some}]\label{MarkovKernelDiagonalCopula}
   Suppose that $\delta \in \mathcal{D}$ and let $w_\delta$ denote a measurable version of its derivative. 
   Then (a version of) the Markov kernel $K_{E_\delta}$ of $E_\delta$ is given by  
    \begin{align*}
        K_{E_\delta}(t, F) = \frac{w_\delta(t)}{2} \mathds{1}_{L(t)}(F) + 
        \left( 1 - \frac{w_\delta(t)}{2} \right) \mathds{1}_{U(t)}(F).
    \end{align*}
  Moreover,  $E_\delta$ is (mutually) completely dependent and concentrates its mass on the graph of a 
  $\lambda$-preserving bijection $h: [0,1] \to [0,1]$ fulfilling $h \circ h = id_{[0,1]}$ if, and only if for 
  $\lambda$-almost every $x\in[0,1]$ either $\delta'(x) \in \{0, 2\}$ or $\delta(x) = x$ holds. 
  \end{Proposition}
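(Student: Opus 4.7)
The plan is to obtain the kernel formula by differentiating $E_\delta(u,v)=\min\{u,v,(\delta(u)+\delta(v))/2\}$ with respect to the first argument, using $L$ and $U$ as the natural breakpoints in the $v$-variable, and then to read off the characterization of complete dependence from the two-atom structure of the resulting kernel.

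I would first fix $u\in[0,1]$ and split $[0,1]$ in the $v$-direction according to which of the three arguments realizes the minimum. Since $g(z)=2z-\delta(z)$ is non-decreasing and continuous (as $\delta$ is Lipschitz with constant $2$), the inequality $(\delta(u)+\delta(v))/2\leq v$ rewrites as $g(v)\geq \delta(u)$ and is equivalent to $v\geq L(u)$, while $(\delta(u)+\delta(v))/2\leq u$ rewrites as $\delta(v)\leq g(u)$ and is equivalent to $v\leq U(u)$. Combined with $L(u)\leq u\leq U(u)$ and $\delta\leq \mathrm{id}$, this gives the clean three-region decomposition $E_\delta(u,v)=v$ on $[0,L(u))$, $E_\delta(u,v)=(\delta(u)+\delta(v))/2$ on $[L(u),U(u)]$, and $E_\delta(u,v)=u$ on $(U(u),1]$. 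Differentiating each piece in $u$ yields $\partial_1 E_\delta(u,v)\in\{0,\,w_\delta(u)/2,\,1\}$ on the three regions for $\lambda$-a.e.\ $u$, and invoking the standard identity $K_{E_\delta}(u,[0,v])=\partial_1 E_\delta(u,v)$ (valid $\lambda$-a.e.\ in $u$ for every fixed $v$, see e.g.\ \cite{trutschnig2011strong}) shows that the conditional distribution is precisely the two-atom measure $\tfrac{w_\delta(u)}{2}\dirac_{L(u)}+(1-\tfrac{w_\delta(u)}{2})\dirac_{U(u)}$, as claimed.

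For the characterization I would observe that this kernel collapses to a Dirac measure exactly when either $w_\delta(t)/2\in\{0,1\}$, i.e.\ $\delta'(t)\in\{0,2\}$, or the two atoms coincide, i.e.\ $L(t)=U(t)$. A short argument using $\delta(t)<t\Rightarrow g(t)>t$ shows that $\delta(t)<t$ already forces $U(t)>t$, hence $L(t)\neq U(t)$; conversely $\delta(t)=t$ immediately yields $U(t)=t$, and at every density point $t$ of the set $A:=\{\delta=\mathrm{id}\}$ the monotonicity of $g$ combined with $g(z)=z$ for $z\in A$ precludes $g(z)\geq t$ for any $z<t$, forcing $L(t)=t$ as well. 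Putting both cases together proves the stated equivalence. Mutual complete dependence is then automatic from $E_\delta=E_\delta^t$; moreover, since $\mu_{E_\delta}$ is supported on the graph of $h$ and this graph is invariant under the flip $(x,y)\mapsto(y,x)$, it also coincides with $\{(h(x),x):x\in[0,1]\}$, which forces $h=h^{-1}$ $\lambda$-a.e.\ and hence $h\circ h=\mathrm{id}_{[0,1]}$.

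The main obstacle I anticipate is the careful handling of measure-zero exceptional sets: points where $\delta'$ fails to exist, boundary points of the three regions (where the inequalities defining $L$ and $U$ are not strict), and the density-point argument used to identify $L(t)=U(t)$ on $\{\delta=\mathrm{id}\}$. Once these technicalities are dispatched, the core of the proof reduces to a direct three-case computation of the partial derivative followed by simple bookkeeping.
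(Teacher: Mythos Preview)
The paper does not supply a proof for this proposition: it is stated as a recalled result from \cite{sanchez2016some} (note the citation in the proposition header and the preceding sentence ``we recall some properties of diagonal copulas going back to \cite{sanchez2016some}''), and the text moves on immediately afterwards without any proof environment. So there is nothing in the present paper to compare your attempt against.

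That said, your sketch is essentially the standard argument and is correct in outline. The three-region decomposition of $v\mapsto E_\delta(u,v)$ via $L(u)$ and $U(u)$ is exactly how the kernel is derived in \cite{sanchez2016some}, and your identification $K_{E_\delta}(u,[0,v])=\partial_1 E_\delta(u,v)$ together with the piecewise computation is the right mechanism. The equivalence for complete dependence is also handled correctly: the two-atom kernel degenerates precisely when one weight vanishes ($\delta'\in\{0,2\}$) or the atoms coalesce ($L=U$), and your density-point argument on $\{\delta=\mathrm{id}\}$ is the appropriate way to show $L(t)=U(t)=t$ for $\lambda$-a.e.\ $t$ in that set. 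The only places requiring genuine care are the ones you already flag---the $v$-dependent null sets when passing from $\partial_1 C$ to a bona fide kernel, and continuity/monotonicity of $L,U$ to ensure the region identification is stable under small perturbations of $u$---but these are routine.
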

 For every $N \in \mathbb{N}$ we will let $\mathcal{D}_N^{0,2}$ denote the family of 
 all diagonals $\delta \in \mathcal{D}$ fulfilling that on each open interval $I_N^i$ we either have 
 that (i) $\delta'(x)=0$ for all $x \in I_N^i$ or that (ii) $\delta'(x)=2$ for all $x \in I_N^i$. 
 Figure \ref{ExampleDiagDense} depicts an example of such a diagonal. Obviously 
 $\mathcal{D}_N^{0,2} = \emptyset$ for odd $N \in \mathbb{N}$ and 
 $\mathcal{D}_N^{0,2} \neq \emptyset$ for even $N \in \mathbb{N}$. To simplify notation set
 $\D_\infty^{0,2} := \bigcup_{N \in \N} \D_N^{0,2}$. \\
 
Using this notation Proposition \ref{MarkovKernelDiagonalCopula} opens the door to the following alternative 
idea of proof for ineq. (\ref{UpperBound}), which we will now tackle step by step: \\
(Step 1) Show that the family $\mathcal{D}_\infty^{0,2}$ is dense in $(\D, \Vert . \Vert_\infty)$. \\
(Step 2) Show that for every $\delta \in \mathcal{D}_N^{0,2}$ the diagonal copula $E_\delta$ is a symmetric, equidistant even 
shuffle, i.e., $E_\delta \in \mathcal{C}_{\mathcal{S}^{sym}}$.\\
(Step 3) Use the maximality property of diagonal copulas (mentioned before) 
and apply Theorem \ref{UpperboundShuffle}. 
\begin{Lemma}\label{DenseDiagonals}
    $\D_\infty^{0,2}$ is dense in $(\D, \Vert . \Vert_\infty)$.  
\end{Lemma}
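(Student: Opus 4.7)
The plan is, given $\delta \in \D$ and $\varepsilon > 0$, to produce an explicit approximant $\delta_N \in \D_N^{0,2}$ for every sufficiently large even $N$. The natural building blocks are the stripe increments $a_i := \delta(i/N) - \delta((i-1)/N)$, which by monotonicity and the 2-Lipschitz property of $\delta$ satisfy $a_i \in [0, 2/N]$ and sum to $1$. The whole question is to replace each $a_i$ by either $0$ or $2/N$ in a way that cumulatively tracks $\delta$ while respecting all four diagonal constraints.

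My construction is a ``round-down'' scheme on the cumulative stripe masses. Set $S_k := (N/2)\,\delta(k/N) = (N/2) \sum_{j \leq k} a_j$ and on stripe $I_N^i$ declare $\delta_N$ to have slope $2$ exactly when $\lfloor S_i \rfloor > \lfloor S_{i-1} \rfloor$, and slope $0$ otherwise. Extending piecewise linearly from $\delta_N(0)=0$, this forces $\delta_N(k/N) = (2/N) \lfloor S_k \rfloor$, i.e., $\delta_N(k/N)$ is the largest value of the form $2j/N$ not exceeding $\delta(k/N)$. By construction $\delta_N \in \D_N^{0,2}$. Moreover $\delta_N \in \D$: monotonicity and 2-Lipschitz-ness are automatic from the slope restriction; $\delta_N(1) = (2/N)\lfloor N/2\rfloor = 1$ since $N$ is even; and the inequality $\delta_N(t) \leq t$ reduces to a short case analysis, using that at nodes $\delta_N(k/N) \leq \delta(k/N) \leq k/N$, that on slope-$0$ stripes $\delta_N$ is constant and thus bounded by its left-node value, and that on slope-$2$ stripes the function $\delta_N(t) - t$ has positive slope and is hence maximized at the right node.

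Finally, the round-down design gives $0 \leq \delta(k/N) - \delta_N(k/N) = (2/N)(S_k - \lfloor S_k \rfloor) < 2/N$ at every node, and combining this with the 2-Lipschitz property of both $\delta$ and $\delta_N$ on each stripe upgrades it to $\Vert \delta - \delta_N \Vert_\infty \leq 6/N$, which is $< \varepsilon$ for large enough $N$. The main subtlety I anticipate is reconciling the three constraints $\delta_N(t) \leq t$, $\delta_N(1) = 1$, and slopes in $\{0,2\}$ simultaneously; rounding \emph{down} rather than to the nearest integer is exactly the mechanism that achieves this, keeping $\delta_N \leq \delta$ at nodes yet still producing exactly $N/2$ slope-$2$ stripes precisely because $N$ is even.
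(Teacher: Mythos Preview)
Your proof is correct and amounts to the same construction as the paper's: your round-down scheme $\delta_N(k/N) = (2/N)\lfloor (N/2)\,\delta(k/N)\rfloor$ and the paper's rule of placing a slope-$2$ stripe at the first grid index where $\delta$ crosses each level $2k/N$ produce the identical piecewise-linear approximant (the two descriptions are dual, since the floor jumps precisely when a new level is crossed). The paper squeezes out a slightly sharper uniform bound by also using $\delta_N \leq \delta$ globally, but your $6/N$ estimate via the triangle inequality is of course sufficient for denseness.
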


\begin{proof}
    Let $\delta \in \D$ and $N \in \N$ be arbitrary but fixed. We construct an element 
    $\Tilde{\delta} \in \D_{2N}^{0,2}$ as follows: For every $i \in \{1,\ldots,2N\}$ consider   
     $y_i := \delta(\frac{i}{2N}) $ and define 
     $$
     i_k := \min \left\{ i \in \{1, \dots , 2N\}: y_i \geq \frac{2k}{2N}=\frac{k}{N} \right\}
     $$ 
     for every $k \in \{1,\ldots,N\}$. Setting $i_0:=0$ this obviously yields 
     $0 = i_0 < 2 \leq i_1 < i_2 < \dots < i_{N-1} < i_N = 2N$. 
     Defining    
    \begin{align*}
        f(x) := 2 \sum_{k=1}^N \mathds{1}_{(\frac{i_k - 1}{2N}, \frac{i_k}{2N})}(x).
    \end{align*}
    it follows immediately that $f$ is a probability density (w.r.t. $\lambda$), so the 
    function $\Tilde{\delta}:[0,1] \rightarrow [0,1]$, defined by
    $$
    \Tilde{\delta}(x) = \int_{[0,x]} f \,d\lambda
    $$ 
    obviously is a Lipschitz continuous (Lipschitz constant $L=2$) piecewise linear distribution function 
    fulfilling $\Tilde{\delta}(0)=0$ and  $\Tilde{\delta}(1)=1$. Furthermore $\Tilde{\delta} \geq \delta_W$ 
    (otherwise $\Tilde{\delta}(1) = 1$ is impossible) and by construction of $f$ we have that
    \begin{align*}
        \Tilde{\delta}\left(\frac{i_k}{2N} \right) &= \frac{2k}{2N} \leq y_{i_k}= \delta\left(\frac{i_k}{2N} \right)
    \end{align*}
    holds for every $k$, implying $\Tilde{\delta} \leq \delta$ on 
    $[0,1]$ since $f$ is $0$ outside $\cup_{k=1}^N (\frac{i_k - 1}{2N}, \frac{i_k}{2N})$. Altogether 
    $\Tilde{\delta} \in \D_{2N}^{0,2}$ follows. \\
    Finally, considering that on the grid $\{0,\frac{i_1}{2N},\frac{i_2}{2N},\ldots,\frac{i_{N-1}}{2N},1\}$ 
    the two diagonals have a maximum distance of at most $\frac{2}{2N}=\frac{1}{N}$, using monotonicity 
    and the fact that on each interval $I_{2N}^i$ the diagonal $\Tilde{\delta}$ has either slope $0$ or $2$
    it follows that   
    $$
      \Vert \delta - \Tilde{\delta} \Vert_\infty \leq \frac{1}{N}.
    $$
    This completes the proof since for sufficiently large $N$ the quantity $\frac{1}{N}$ is smaller than any fixed 
    $\varepsilon>0$. 
 \end{proof}
 We will now clarify under which conditions diagonal copulas are equidistant even shuffles (and vice versa). 
 Doing so we will call $\pi \in \Sigma_N$ bi-monotone if $\pi$ restricted to $I_\pi^-$ is strictly increasing.
 Notice that if $\pi$ is symmetric and bi-monotone then $\pi$ restricted to $I_\pi^+$ is strictly increasing too.
 The right panel of Figure \ref{ExampleDiagDense} depicts a shuffle $S_\pi$ corresponding
 to a bi-monotone symmetric permutation $\pi \in \Sigma_{12}$. 
  
 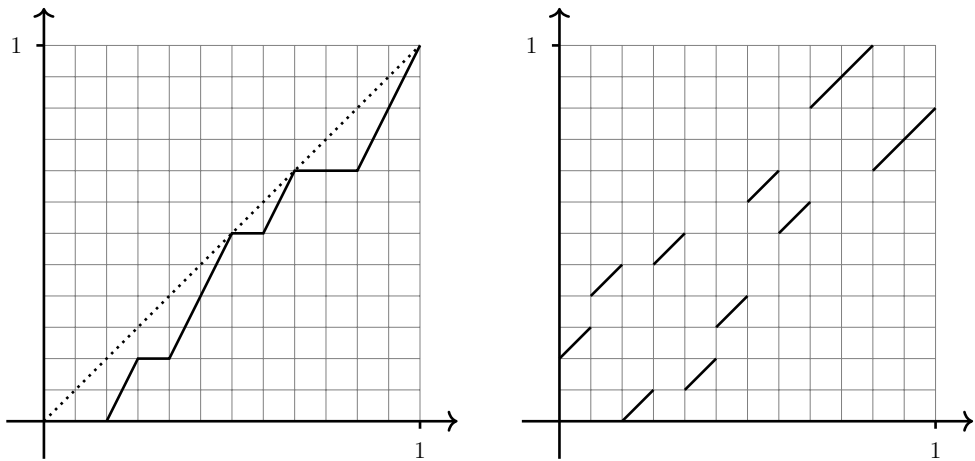
\begin{figure}[H]
    \centering
    \begin{subfigure}{.5\textwidth}
        \centering
        \begin{tikzpicture}[scale=5]
    	\draw[step=1/12, gray, very thin] (0,0) grid (1,1); 
    	\draw[->,line width=1] (-0.1,0) -- (1.1,0);
            \draw[->,line width=1] (0,-0.1) -- (0,1.1);
            \draw[-,line width=1,dotted] (0,0) -- (1,1);
            
            \draw[-,line width=1] (2/12,0) -- (3/12,2/12) -- (4/12,2/12) -- (6/12,6/12) -- (7/12,6/12) -- (8/12,8/12) -- (10/12,8/12) -- (1,1); 
    
            \draw[-,line width=1] (1,0) -- (1,-0.02);
    	    \node[below=1pt of {(1,-0.02)}, scale= 0.75, outer sep=2pt,fill=white] {$1$};
            \draw[-,line width=1] (0,1) -- (-0.02,1);
    	    \node[left=1pt of {(-0.02,1)}, scale= 0.75, outer sep=2pt,fill=white] {$1$};
        \end{tikzpicture}
    \end{subfigure}%
    \begin{subfigure}{.5\textwidth}
        \centering
        \begin{tikzpicture}[scale=5]
    	\draw[step=1/12, gray, very thin] (0,0) grid (1,1); 
    	\draw[->,line width=1] (-0.1,0) -- (1.1,0);
            \draw[->,line width=1] (0,-0.1) -- (0,1.1);
            
    
            \draw[-,line width=1] (1,0) -- (1,-0.02);
    	    \node[below=1pt of {(1,-0.02)}, scale= 0.75, outer sep=2pt,fill=white] {$1$};
            \draw[-,line width=1] (0,1) -- (-0.02,1);
    	    \node[left=1pt of {(-0.02,1)}, scale= 0.75, outer sep=2pt,fill=white] {$1$};

            \draw[-,line width=1] (2/12,0/12) -- (3/12,1/12);
            \draw[-,line width=1] (4/12,1/12) -- (5/12,2/12);
            \draw[-,line width=1] (5/12,3/12) -- (6/12,4/12);
            \draw[-,line width=1] (7/12,6/12) -- (8/12,7/12);
            \draw[-,line width=1] (10/12,8/12) -- (12/12,10/12);

            \draw[-,line width=1] (0/12,2/12) -- (1/12,3/12);
            \draw[-,line width=1] (1/12,4/12) -- (2/12,5/12);
            \draw[-,line width=1] (3/12,5/12) -- (4/12,6/12);
            \draw[-,line width=1] (6/12,7/12) -- (7/12,8/12);
            \draw[-,line width=1] (8/12,10/12) -- (10/12,12/12);
            
        \end{tikzpicture}
    \end{subfigure}
    \caption{Example of a diagonal $\delta \in \D_{12}^{0,2}$ (left panel) and the corresponding diagonal co\-pula/shuffle
    $E_\delta=C_{S_\pi}$ with $\pi=(3,5,1,6,2,4,8,7,11,12,9,10)$ and $I_\pi^-=(3,5,6,8,11,12)$ (right panel).}
         \label{ExampleDiagDense}
\end{figure}

\begin{Theorem}\label{Delta_N_EquidistantShuffle}
    Suppose that $N \in \N$ is even and that $\delta \in \D_N^{0,2}$. Then the diagonal copula $E_\delta$ 
    is an equidistant even shuffle, i.e., 
    $E_\delta \in \mathcal{C}_{\mathcal{S}_N^{sym}} \subseteq \mathcal{C}_{\mathcal{S}^{sym}}$ and 
    the corresponding permutation $\pi\in\Sigma_N$ is symmetric, bi-monotone and fulfills $I_\pi^1=\emptyset$. 
\end{Theorem}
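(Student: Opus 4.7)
The plan is to translate the piecewise linear structure of $\delta$ into explicit combinatorial data for a permutation $\pi\in\Sigma_N$, and then read off the stated properties directly from this data. First I would split $\{1,\ldots,N\}$ into $A:=\{i : \delta'(t)=2 \text{ for } t\in I_N^i\}$ and $B:=\{i : \delta'(t)=0 \text{ for } t\in I_N^i\}$. Since $\delta(0)=0$, $\delta(1)=1$, and each slope-$2$ interval contributes $\frac{2}{N}$ to the increment of $\delta$, necessarily $\#A=\#B=\frac{N}{2}$. Let $\alpha_1<\cdots<\alpha_{N/2}$ and $\beta_1<\cdots<\beta_{N/2}$ denote the ordered enumerations of $A$ and $B$, and set $P(k):=\#(A\cap\{1,\ldots,k\})$ and $Q(k):=k-P(k)$.

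Next I would make the maps $L$ and $U$ from Proposition \ref{MarkovKernelDiagonalCopula} completely explicit. Observing that $g(t)=2t-\delta(t)$ has slope $0$ on every $A$-interval and slope $2$ on every $B$-interval, and that $\delta(\frac{k}{N})=\frac{2P(k)}{N}$, $g(\frac{k}{N})=\frac{2Q(k)}{N}$, tracing where $g$ first attains the level $\delta(t)$ (resp.\ where $\delta$ first attains $g(t)$) is a short counting exercise and yields
\begin{align*}
L(t) \;&=\; t + \frac{\beta_{P(i-1)+1}-i}{N}\qquad\text{for } t\in I_N^i,\ i\in A,\\
U(t) \;&=\; t + \frac{\alpha_{Q(i-1)+1}-i}{N}\qquad\text{for } t\in I_N^i,\ i\in B.
\end{align*}
Since $\frac{w_\delta}{2}\equiv 1$ on $A$-intervals and $\frac{w_\delta}{2}\equiv 0$ on $B$-intervals, Proposition \ref{MarkovKernelDiagonalCopula} then identifies $E_\delta$ with the equidistant even shuffle $C_{S_\pi}$ corresponding to the permutation $\pi\in\Sigma_N$ given by $\pi(i):=\beta_{P(i-1)+1}$ for $i\in A$ and $\pi(i):=\alpha_{Q(i-1)+1}$ for $i\in B$.

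Finally I would verify the combinatorial properties of $\pi$. By construction $\pi$ maps $A$ bijectively onto $B$ and vice versa, so $\pi(i)\neq i$ for every $i$ and hence $I_\pi^0=\emptyset$. The identity $Q(\beta_{P(i-1)+1}-1)=P(i-1)$ (which follows from counting $A$- and $B$-indices up to $\beta_{P(i-1)+1}$) yields $\pi(\pi(i))=i$ for $i\in A$; the analogous computation on $B$ shows that $\pi$ is an involution, so $E_\delta\in\mathcal{C}_{\mathcal{S}_N^{sym}}$. Proposition \ref{MarkovKernelDiagonalCopula} also provides $L(t)\leq t\leq U(t)$, so the translation formulas above force $\pi(i)<i$ on $A$ and $\pi(i)>i$ on $B$; consequently $I_\pi^-=A$. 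Enumerating $A=\{\alpha_1<\cdots<\alpha_{N/2}\}$ and noting $P(\alpha_j-1)=j-1$ yields $\pi(\alpha_j)=\beta_j$, which is strictly increasing in $j$, i.e.\ exactly the bi-monotonicity of $\pi$.

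The main obstacle will be Step 2, the explicit computation of $L$ and $U$: one must carefully track the values of the piecewise linear functions $\delta$ and $g$ along the dyadic grid of width $\frac{1}{N}$ and invoke the constraint $\delta(t)\leq t$ to guarantee that $\beta_{P(i-1)+1}\leq i$ (and similarly for $\alpha$). Once these closed-form translation formulas are in hand, the involution property, $I_\pi^0=\emptyset$, and bi-monotonicity all reduce to routine bookkeeping with the counting functions $P$ and $Q$.
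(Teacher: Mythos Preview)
Your proposal is correct and follows the same overall path as the paper: both start from Proposition~\ref{MarkovKernelDiagonalCopula} and analyse the behaviour of $L$ and $U$ on the slope-$2$ versus slope-$0$ intervals of $\delta$. The difference is in the level of explicitness. The paper argues qualitatively --- writing $L=g^{-}\circ\delta$, using the range of $g^{-}$ to see that $L$ sends $J^2$-intervals into $J^0$-intervals, invoking the chain rule for slope $1$, and citing monotonicity of $L,U$ for bi-monotonicity; symmetry of $\pi$ is taken directly from the $h\circ h=\mathrm{id}$ part of Proposition~\ref{MarkovKernelDiagonalCopula}. You instead compute $L$ and $U$ in closed form via the counting functions $P,Q$ and arrive at the clean pairing $\pi(\alpha_j)=\beta_j$, $\pi(\beta_j)=\alpha_j$, from which the involution property, $I_\pi^0=\emptyset$, and bi-monotonicity are immediate. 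Your route costs the bookkeeping of Step~2 (where, as you note, $\delta(t)\le t$ at the grid points is exactly what guarantees $\beta_{P(i-1)+1}\le i$), but it buys a self-contained combinatorial description of $\pi$ that does not lean on the $h\circ h=\mathrm{id}$ clause of Proposition~\ref{MarkovKernelDiagonalCopula}.
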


\begin{proof}
    We already know from Proposition \ref{MarkovKernelDiagonalCopula} that under the assumptions of the theorem
    the corresponding diagonal copula $E_\delta$ is mutually completely dependent and that there exists some
    $\lambda$-preserving, bijective $h:[0,1] \rightarrow [0,1]$ fulfilling $h \circ h=id$ 
    such that $E_\delta=C_h$. It remains to show that $h \in \mathcal{S}_N^{sym}$ and that the corresponding
    permutation $\pi$ is bi-monotone (the fact that $\pi \circ \pi$ is a direct consequence of symmetry of $E_\delta$).  
    Defining
     \begin{eqnarray*}
        J^0 &:=& \{i \in \{1, \dots, N\}: \delta'(x) = 0 \text{ for every } x \in I_N^i\}, \\
        J^2 &:=& \{i \in \{1, \dots, N\}: \delta'(x) = 2 \text{ for every } x \in I_N^i\}.
    \end{eqnarray*}
   we have $J^0 \cup J^2 =  \{1, \dots, N\}$.   
   Notice that on every interval $I_N^j$ with $j \in J^0$ the function $L$ is constant, whereas 
   $U$ is constant on every interval $I_N^j$ with $j \in J^2$. Furthermore interpreting 
   $\delta$ and $g$ as distribution functions with quasi-inverses $\delta^-$ and $g^-$, respectively,
   it follows that $L=g^- \circ \delta$ and that  
   $$
   g^-([0,1])=  \bigcup_{j \in J^0} \overline{I}_N^j
   $$
   holds, whereby $\overline{I}_N^i$ denotes the closure of the open interval $I_N^i$. As a direct consequence, 
   for every $x \in I_N^j$ with $j \in J^2$ we have that $L(x) \in \bigcup_{j \in J^0} I_N^j$. 
   Proceeding analogously for $U$ shows that for every $x \in I_N^j$ with $j \in J^0$ we have that 
   $U(x) \in \bigcup_{j \in J^2} I_N^j$. Finally considering at every point of differentiability 
   $L$ and $U$ can only have slope $0$ or $1$ by the chain rule it follows that $E_\delta$ 
   is indeed an equidistant even shuffle. 
   Letting $\pi \in \Sigma_N$ denote the corresponding permutation we get that 
   $\pi$ maps $J^0$ (bijectively) to $J^2$ and vice versa and that $I_\pi^-=J^2$ holds. 
   Furthermore, using the fact that $L,U$ are non-decreasing,   
   $\pi$ is strictly increasing on $J^0$ and on $J^2$, i.e., $\pi$ is bi-monotone. \\
   Finally, considering that according to \cite{sanchez2016some} $\delta(t) < t $ implies $ L(t) < t $ and $ U(t) > t$
   and that for $\delta \in \D_N^{0,2}$ obviously $\delta(t) < t$ holds for all but at most $N$ points, 
   we conclude that $I_\pi^1=\emptyset$ and the proof is complete.
\end{proof}

Now Step 3 is obvious - combining Lemma \ref{DenseDiagonals} and Theorem \ref{Delta_N_EquidistantShuffle}
directly completes our second alternative proof for the upper inequality.  \\

\noindent Considering that (to the best of our knowledge) 
the interplay between diagonal copulas and equidistant even shuffles hasn't be studied yet we 
conclude this section with the converse of Theorem \ref{Delta_N_EquidistantShuffle}.
\begin{Proposition}
    Let $N\in\N$ be even, $\pi \in \Sigma_N$ be a symmetric, bi-monotone permutation with $I_\pi^1=\emptyset$, and $S_\pi$ 
    denote the corresponding shuffle. Then the shuffle $C_{S_\pi}$ is a diagonal copula.
\end{Proposition}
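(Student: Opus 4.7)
The plan is to construct an explicit diagonal $\delta \in \mathcal{D}_N^{0,2}$ for which $E_\delta = C_{S_\pi}$, by combining Theorem \ref{Delta_N_EquidistantShuffle} with a uniqueness argument for bi-monotone involutions. Concretely, I would like $\delta$ to have slope $2$ precisely on the intervals $I_N^i$ with $i \in I_\pi^-$ (and slope $0$ elsewhere), since then Theorem \ref{Delta_N_EquidistantShuffle} will describe $E_\delta$ as a symmetric bi-monotone equidistant shuffle whose descending set coincides with $I_\pi^-$.

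The first step is to verify that the candidate $\delta(t) := \int_{[0,t]} f\,d\lambda$ with density $f := 2\sum_{i \in I_\pi^-} \mathds{1}_{I_N^i}$ is indeed a valid diagonal in $\mathcal{D}_N^{0,2}$. Since $\pi$ is a fixed-point-free involution we have $|I_\pi^-| = |I_\pi^+| = N/2$, so $\int_{[0,1]} f\,d\lambda = 1$ and $\delta(1) = 1$. Monotonicity, Lipschitz continuity with constant $2$, the boundary values, and the slope-in-$\{0,2\}$ condition are immediate from the construction; so the only non-trivial requirement is $\delta(t) \leq t$ on $[0,1]$.

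This last inequality is the main technical step, and the point where bi-monotonicity of $\pi$ enters in an essential way. Piecewise linearity reduces the inequality to $|I_\pi^- \cap \{1,\ldots,k\}| \leq k/2$ at each grid point $t = k/N$. Writing the elements of $I_\pi^- \cap \{1,\ldots,k\}$ as $i_1 < \cdots < i_m$, bi-monotonicity forces $\pi(i_1) < \cdots < \pi(i_m)$ to be $m$ distinct elements of $I_\pi^+$ with $\pi(i_j) < i_j \leq k$, hence $|I_\pi^+ \cap \{1,\ldots,k\}| \geq m$ and $k \geq 2m$ follows.

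Applying Theorem \ref{Delta_N_EquidistantShuffle} to the constructed $\delta$ then yields $E_\delta = C_{S_{\pi'}}$ for a symmetric, bi-monotone permutation $\pi' \in \Sigma_N$ without fixed points, and tracing the proof of that theorem gives $I_{\pi'}^- = J^2 = I_\pi^-$. To conclude I would argue that any symmetric, bi-monotone, fixed-point-free permutation in $\Sigma_N$ is completely determined by its descending set: involutivity forces the restriction to $I^-$ to be a bijection onto $I^+$, bi-monotonicity makes this restriction the unique order-preserving bijection, and symmetry then pins down the restriction to $I^+$. Consequently $\pi' = \pi$ and $C_{S_\pi} = E_\delta$ is a diagonal copula.
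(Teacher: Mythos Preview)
Your proof is correct and follows the same strategy as the paper: produce a diagonal $\delta \in \mathcal{D}_N^{0,2}$ with $J^2 = I_\pi^-$, apply Theorem~\ref{Delta_N_EquidistantShuffle}, and conclude via the uniqueness of a symmetric, bi-monotone, fixed-point-free permutation with prescribed descending set. The only difference is that the paper sets $\delta := \delta_{C_{S_\pi}}$ directly---being the diagonal of a copula this automatically satisfies $\delta \leq \mathrm{id}$, so your explicit verification of that inequality is bypassed (and, incidentally, your counting argument for $\delta \leq \mathrm{id}$ needs only that $\pi$ is a fixed-point-free involution, not bi-monotonicity; the latter enters essentially only in the final uniqueness step, just as in the paper).
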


\begin{proof}
If $\pi \in \Sigma_N$ fulfills the assumptions of the proposition, then letting $S_\pi$ denote the 
corresponding shuffle we have 
$\delta:=\delta_{C_{S_\pi}} \in \mathcal{D}_N^{0,2}$, so according to Theorem \ref{Delta_N_EquidistantShuffle} 
the induced diagonal copula $E_\delta$ is an equidistant shuffle and the corresponding 
permutation $\pi^\ast$ is symmetric, bi-monotone and fulfills $I_{\pi^\ast}^1=\emptyset$. 
Furthermore (as shown in the proof of Theorem \ref{Delta_N_EquidistantShuffle}) we have $I_{\pi^\ast}^-=J^2=I_{\pi}^-$.
Considering the facts that $\pi$ is strictly increasing on $I_{\pi}^-$, that $\pi^\ast$ is strictly increasing 
on $I_{\pi^\ast}^-$, and that the two sets coincide, the identity $\pi = \pi^\ast$ follows immediately, and the 
proof is complete.  
\end{proof}

\clearpage
\section{A novel proof for the sharp lower bound}\label{SecLowerBound}
\noindent In this section we focus on the inequality 
\begin{align}\label{LowerBound}
    \rho(C) \geq \frac{2}{9} \sqrt{3} \left( 1 + 2\phi(C) \right)^{3/2} - 1.
\end{align}
In the original paper \cite{bukovvsek2024exact} the authors derived this very equality 
by first showing it for the subclass of copulas assigning full mass to the main and second diagonal (i.e., 
$\mathbb{P}(X=Y) + \mathbb{P}(Y=1-X)=1$ with $(X,Y)\sim C$) and then extending the results to the full class. 
The chosen method of proof is interesting and novel but at the same time technically quite involved. 
In the sequel we show that working with shuffles, another (seemingly novel) mass rearrangement idea, and 
denseness arguments allows for a shorter and less technical alternative proof.   \\

We start with some first observations on the lower inequality, motivate the mass rearrangement idea, then 
prove a much more general mass rearrangement result for $L_2$ functions, and finally apply it to 
derive ineq. (\ref{LowerBound}). \\
As in the previous section we will work with symmetric permutations $\pi \in \Sigma_N$, the shuffles $S_\pi$ and 
the corresponding sets $I_\pi^-$. To simplify notation throughout this section we will write $k:=\#I_\pi^- \leq \frac{N}{2}$.
For $k=0$ we obviously have $S_\pi=id_{[0,1]}$, implying $C_{S_\pi}=M$, which yields equality in (\ref{LowerBound}).
It therefore suffices to consider $k\geq 1$. \\
For $k\geq 1$ we will let $0 < p_1 \leq \dots \leq p_k$ denote the order statistics of the points 
$\frac{i_l-\pi(i_l)}{N}$ with $l \in \{1,\ldots,k\}$ and $I_\pi^-=\{i_1,\ldots,i_k\} \subset \{1,\ldots,N\}$, and will 
write $\textbf{p} = (p_1, \dots, p_k) \in [0,1]^k$. 
Using this notation the formulas for Spearman's $\rho$ and Spearman's footrule $\phi$ 
(see Lemma \ref{FormulasRhoPhi}) simplify to
    \begin{align*}
        \rho(S_\pi) &= 1 - \frac{12}{N} \sum_{i = 1}^{k} p_i^2, \quad \phi(S_\pi) = 1 - \frac{6}{N} \sum_{i = 1}^{k} p_i.
    \end{align*}
    Hence the desired inequality is equivalent to
    \begin{align*}
          \frac{2}{9} \sqrt{3} \left( 3 - \frac{12}{N} \sum_{i = 1}^k p_i \right)^{3/2} - 1 \leq 1 - \frac{12}{N}
           \sum_{i = 1}^k p_i^2
     \end{align*}
     which, in turn, simplifies to 
     \begin{align*}       
       \left( 1 - \frac{4}{N} \sum_{i = 1}^k p_i \right)^{3/2} \leq 1 - \frac{6}{N} \sum_{i = 1}^k p_i^2.
    \end{align*}
    In other words, ineq. (\ref{LowerBound}) boils down to showing that 
    \begin{align}\label{def:f.lower}
     m_{\pi,k}(\textbf{p}) := 1 - \frac{6}{N} \sum_{i = 1}^k p_i^2 - \left( 1 - \frac{4}{N} 
     \sum_{i = 1}^k p_i \right)^{3/2}  \geq 0
    \end{align}
    for all symmetric permutations $\pi$.  
    
    The following example illustrates the idea underlying the rearrangement, which we will 
    work with in this section:   
\begin{Example}\label{ex:rearrange}
\emph{
Consider $N=8$ and the symmetric permutation $\pi \in \Sigma_8$, given by $\pi=(4,7,8,1,6,5,2,3)$. 
Then we have $I_\pi^-=\{4,6,7,8\}, k=4$, and $\frac{i_1-\pi(i_1)}{N} = \frac{3}{8}, \frac{i_2-\pi(i_2)}{N} = \frac{1}{8},
\frac{i_3-\pi(i_3)}{N} = \frac{5}{8}, \frac{i_4-\pi(i_4)}{N} = \frac{5}{8}$, so the vector $\textbf{p}$ is given by 
$\mathbf{p}=\frac{1}{8}\cdot (1,3,5,5)$. The left panel of Figure \ref{MassReDistributionEx} depicts
the shuffle $C_{S_\pi}$, the shaded squares illustrate $\mathbf{p}$ and $I_\pi^-$, their total number is 
$\Delta=N \cdot \sum_{i=1}^kp_i=14$. Notice that $S_{\hat{\pi}}$ in the right panel can be constructed from $S_\pi$ 
by shifting the squares as much to the right as possible., i.e., the symmetric permutation $\hat{\pi}$ 
is given by 
$\hat{\pi}=(8,7,3,6,5,4,2,1) \in \Sigma_8$. Denoting all quantities corresponding $\hat{\pi}$ with 
a hat, for $\hat{\pi}$ we have $I_{\hat{\pi}}^-=\{6,7,8\}, \hat{k}=3$, and
$\hat{\mathbf{p}}=\frac{1}{8}\cdot(2,5,7)$, implying $\hat{\Delta}=N \cdot \sum_{i=1}^{\hat{k}}\hat{p}_i=14=\Delta$.
\\
}
    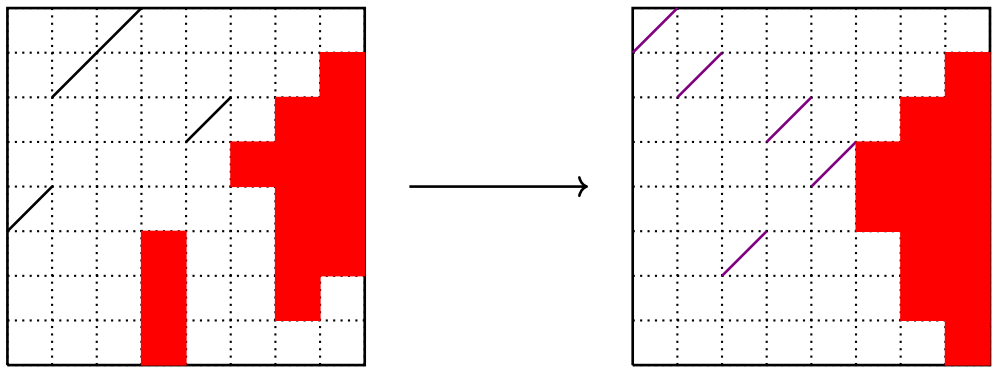
\begin{figure}[H]
    \centering
    \begin{tikzpicture}[scale=4.75]
    	\draw[-,line width=1] (0,0) -- (0,1) -- (1,1) -- (1,0) -- (0,0);
            
    	\draw[-,line width=1] (3/8,0) -- (4/8,1/8);
            \draw[-,line width=1] (6/8,1/8) -- (8/8,3/8);
            \draw[-,line width=1] (5/8,4/8) -- (6/8,5/8);
            \draw[-,line width=1] (0,3/8) -- (1/8,4/8);
            \draw[-,line width=1] (1/8,6/8) -- (3/8,8/8);
            \draw[-,line width=1] (4/8,5/8) -- (5/8,6/8);

            
            \draw[step=1/8,black,thick,xshift=0cm,yshift=0cm,dotted] (0,0) grid (1,1);

            \draw[->,line width=1] (9/8,1/2) -- (13/8,1/2);
            
            \draw[-,line width=1] (14/8,0) -- (14/8,1) -- (22/8,1) -- (22/8,0) -- (14/8,0);

            \draw[step=1/8,black,thick,xshift=0cm,yshift=0cm,dotted] (14/8,0) grid (22/8,1);

            \draw[-,line width=1,violet] (21/8,0) -- (22/8,1/8);
            \draw[-,line width=1,violet] (20/8,1/8) -- (21/8,2/8);
            \draw[-,line width=1,violet] (19/8,3/8) -- (20/8,4/8);
            \draw[-,line width=1,violet] (18/8,4/8) -- (19/8,5/8);
            \draw[-,line width=1,violet] (16/8,2/8) -- (17/8,3/8);
            \draw[-,line width=1,violet] (17/8,5/8) -- (18/8,6/8);
            \draw[-,line width=1,violet] (15/8,6/8) -- (16/8,7/8);
            \draw[-,line width=1,violet] (14/8,7/8) -- (15/8,8/8);

            \draw [fill=red, draw=red, nearly transparent] (3/8,0) rectangle (4/8, 3/8); 
            \draw [fill=red, draw=red, nearly transparent] (5/8,4/8) rectangle (6/8, 5/8);
            \draw [fill=red, draw=red, nearly transparent] (6/8,1/8) rectangle (7/8, 6/8);
            \draw [fill=red, draw=red, nearly transparent] (7/8,2/8) rectangle (8/8, 7/8);

            \draw [fill=red, draw=red, nearly transparent] (19/8,3/8) rectangle (20/8, 5/8);
            \draw [fill=red, draw=red, nearly transparent] (20/8,1/8) rectangle (21/8, 6/8);
            \draw [fill=red, draw=red, nearly transparent] (21/8,0/8) rectangle (22/8, 7/8);
    \end{tikzpicture}
    \caption{The mass rearrangement discussed in Example \ref{ex:rearrange}. Starting from the shuffle $S_\pi$ in the left
    panel we construct the shuffle $S_{\hat{\pi}} $ for which Spearman's footrule is the same
    but Spearman's $\rho$ is strictly smaller.}\label{MassReDistributionEx}
    \end{figure}   
\noindent \emph{As a direct consequence, $\phi(S_\pi)=\phi(S_{\hat{\pi}})=-\frac{5}{16}$ holds, 
i.e., Spearman's footrule does not change
when moving from $S_\pi$ to $S_{\hat{\pi}}$. Spearman's $\rho$, however does change, it decreases:  
in fact, we get 
$\rho(S_{\hat{\pi}})=1-\frac{3}{2} \cdot \frac{39}{32}=-0.828125 < -0.40625=1-\frac{3}{2}\cdot \frac{15}{16}=\rho(S_\pi)$. 
In terms of $m_{\pi,k}$ this translates to  
$$
m_{\pi,k}(\textbf{p}) >  m_{\hat{\pi},\hat{k}}(\hat{\textbf{p}})
$$
and for showing $m_{\pi,k}(\textbf{p}) \geq 0$ it suffices to show $m_{\hat{\pi},\hat{k}}(\hat{\textbf{p}}) \geq 0$. \\
The rearrangement from $S_\pi$ to $S_{\hat{\pi}}$ can be formalized as follows: 
Defining the functions $f:[0,1] \rightarrow [0,\infty)$ and $g:[0,1] \rightarrow \mathbb{R}$ by
\begin{eqnarray*}
f(x) &=& \frac{1}{2} \sum_{i=1}^4 p_i \mathbf{1}_{(\tfrac{i-1}{4},\tfrac{i}{4}]}(x), \quad 
g(x) = \frac{1}{2}\sum_{i=1}^4 q_i \mathbf{1}_{(\tfrac{i-1}{4},\tfrac{i}{4}]}(x),
\end{eqnarray*} 
with $(q_1,q_2,q_3,q_4)=\frac{1}{8}\cdot(1,1,0,-2)$, it follows that $p_1-q_1=0$ and 
$p_i - q_i = \hat{p}_{i-1}$ for $i \in \{2,3,4\}$. Obviously $f$ is non-decreasing whereas $g$ is non-increasing 
and fulfills $\int_{[0,1]} g(x) d\lambda(x)=\frac{1}{8} \sum_{i=1}^4 q_i=0$. 
Furthermore, letting $\Vert \cdot \Vert_2$ denote the $L_2$-norm with respect
to $\lambda$ on $\mathcal{B}([0,1])$ we have that  
\begin{eqnarray*}
\frac{1}{N}\sum_{i=1}^3 \hat{p_i}^2 &=&  \frac{1}{N}\sum_{i=1}^4 (p_i-q_i)^2 = \Vert f-g \Vert_2^2 \\
\frac{1}{N}\sum_{i=1}^4 p_i^2 &=&  \Vert f \Vert_2^2, 
\end{eqnarray*}
which yields the following equivalence:
\begin{eqnarray*}
\frac{1}{N}\sum_{i=1}^3 \hat{p_i}^2 \geq \frac{1}{N}\sum_{i=1}^4 p_i^2 \quad \textrm{if, and only if }  \quad
 \Vert f-g \Vert_2^2  \geq \Vert f \Vert_2^2.
\end{eqnarray*}
}
\end{Example}    
    
The last inequality $\Vert f-g \Vert_2^2  \geq \Vert f \Vert_2^2 $ 
turns out to be a (very) special case of a more general observation on 
$L_2$-norms of monotone functions $f,g$. Considering that the result may also be useful in the context of 
other problems we formulate and prove it directly for 
general finite measure spaces and a non-decreasing function $f$ in combination with a rearrangement function $g$ 
fulfilling the following (much weaker montonicity) property: 

\begin{Definition}
A function $g: [0,1] \rightarrow \mathbb{R}$ is called of decreasing block structure with respect to $0$ if
there exists some $x_0 \in [0,1]$ such that one of the following properties holds:
\begin{enumerate}
\item[(B1)] $g(x) \geq 0$ for every $x \in [0,x_0)$ and $g(x) \leq 0$ for every $x \in [x_0,1]$.
\item[(B2)] $g(x) \geq 0$ for every $x \in [0,x_0]$ and $g(x) \leq 0$ for every $x \in (x_0,1]$.
\end{enumerate}
\end{Definition}
Obviously every non-increasing $g: [0,1] \rightarrow \mathbb{R}$ is of decreasing block structure with respect to $0$.
The property $\int_{[0,1]} g d\mu=0$ is the reasons why we call $g$ a rearrangement function.  

\begin{Lemma}\label{LemmaResDis}
    Suppose that $\mu$ is a finite measure on $\mathcal{B}([0,1])$ and that $f,g \in L_2(\mu)$ fulfill the following properties: 
    \begin{enumerate}
        \item $f:[0,1] \rightarrow [0,\infty)$ is non-decreasing.
        \item $g:[0,1] \rightarrow \mathbb{R}$ has decreasing block structure with respect to $0$ and
        fulfills $\int_{[0,1]} g d\mu=0$.     
    \end{enumerate}
    Then the following inequality holds:
    \begin{equation}\label{ineq:rearrange.simple}
        \Vert f-g \Vert_2^2 \geq \Vert f \Vert_2^2 + \Vert g \Vert_2^2 
    \end{equation}  
\end{Lemma}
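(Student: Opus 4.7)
The plan is to reduce the inequality to the statement $\int_{[0,1]} f g\, d\mu \leq 0$ and then exploit the zero-mean condition on $g$ by subtracting a cleverly chosen constant from $f$.

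First, I expand: since $f, g \in L_2(\mu)$ and $\mu$ is finite, $fg \in L_1(\mu)$ by Cauchy-Schwarz, and
\begin{equation*}
\|f - g\|_2^2 = \|f\|_2^2 + \|g\|_2^2 - 2 \int_{[0,1]} fg\, d\mu,
\end{equation*}
so ineq. \eqref{ineq:rearrange.simple} is equivalent to $\int_{[0,1]} fg\, d\mu \leq 0$. The hypothesis $\int_{[0,1]} g\, d\mu = 0$ allows me to replace $f$ by $f - c$ for any real constant $c$ without affecting the value of $\int fg\, d\mu$, since
\begin{equation*}
\int_{[0,1]} (f - c) g\, d\mu = \int_{[0,1]} fg\, d\mu - c \int_{[0,1]} g\, d\mu = \int_{[0,1]} fg\, d\mu.
\end{equation*}

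The crucial step is to pick $c := f(x_0)$, where $x_0$ is the splitting point from the definition of decreasing block structure. Since $f$ is non-decreasing, $f(x) \leq c$ whenever $x < x_0$ and $f(x) \geq c$ whenever $x > x_0$; at the single point $x = x_0$ equality holds. I now check that $(f(x) - c) g(x) \leq 0$ for every $x \in [0,1]$. In case (B1) the sign of $g$ flips at $x_0$ with $g(x_0) \leq 0$: for $x < x_0$ one has $f(x) - c \leq 0$ and $g(x) \geq 0$, hence the product is non-positive, while for $x \geq x_0$ one has $f(x) - c \geq 0$ and $g(x) \leq 0$, again a non-positive product. In case (B2) the analogous pointwise argument works with the roles of the boundary point $x_0$ reversed. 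Either way, $(f - c) g \leq 0$ pointwise on $[0,1]$.

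Integrating this pointwise inequality gives $\int_{[0,1]} (f-c) g\, d\mu \leq 0$, which by the identity above is precisely $\int_{[0,1]} fg\, d\mu \leq 0$, and the desired inequality follows. I do not foresee any real obstacle: the only subtlety is the single point $x_0$ (where the sign behaviour of $g$ switches and $f$ may have a jump), and this is handled automatically because the pointwise inequality $(f-c)g \leq 0$ either is satisfied there or concerns a $\mu$-null set worth of considerations when integrating. No integrability issues arise beyond what is already given by $f, g \in L_2(\mu)$ and finiteness of $\mu$.
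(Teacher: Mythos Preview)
Your proof is correct and follows essentially the same approach as the paper: both reduce to showing $\int fg\,d\mu\le 0$ and exploit the pivot value $f(x_0)$ together with the zero-mean of $g$. Your packaging via the substitution $f\mapsto f-f(x_0)$ and a single pointwise inequality is a slightly cleaner variant of the paper's argument, which instead splits the integral at $x_0$ and bounds each piece by $f(x_0)\int g$.
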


\begin{proof}
    First of all we obviously have $f-g \in L_2(\mu)$ as well as
    \begin{eqnarray*}
        \Vert f-g \Vert_2^2 &=& \Vert f \Vert_2^2 + \Vert g \Vert_2^2 - 2 \underbrace{\int_{[0,1]}fg d\mu}_{=:I},
    \end{eqnarray*} 
    so it suffices to show $I \leq 0$. We will prove the inequality for $g$ fulfilling property (B1) - 
    the case (B2) can be handled in the same manner.
    Using $0=\int_{[0,1]} g d\mu=\int_{[0,x_0)} g d\mu\, + \, \int_{[x_0,1]} g d\mu$ we have
   \begin{equation}\label{ineq:rearrange}
        \int_{[0,x_0)} g d\mu = - \int_{[x_0,1]} g d\mu = \int_{[x_0,1]} \underbrace{(-g)}_{\geq0} d\mu. 
    \end{equation}   
    Therefore, using monotonicity of $f$ it follows that
    \begin{align*}
        \int_{[0,x_0)} fg d\mu &\leq f(x_0) \int_{[0,x_0)} g d\mu = f(x_0)\int_{[x_0,1]} (-g) d\mu \leq 
        \int_{[x_0,1]} f(-g) d\mu \\&= - \int_{[x_0,1]} fg d\mu, 
    \end{align*}
    implying $I \leq 0$, and the proof is complete.
\end{proof}


The previous lemma can be extended to finite sums of function $g_i$ of block structure - the following 
general result holds:

\begin{Theorem}\label{thm:rearrange.general:sum}
Suppose that $\mu$ is a finite measure on $\mathcal{B}([0,1])$ and that the functions $f,g_1,\ldots,g_n \in L_2(\mu)$ 
fulfill the following properties: 
\begin{enumerate}
        \item $f:[0,1] \rightarrow [0,\infty)$ is non-decreasing.
        \item Each $g_i:[0,1] \rightarrow \mathbb{R}$ is of decreasing block structure with respect to $0$, 
        and fulfills $\int_{[0,1]} g_i d\mu=0$.
        \item Whenever $i \neq j$ we have $g_i(x) g_j(x)=0$ for every $x \in [0,1]$.            
    \end{enumerate}
    Then the following inequality holds for $g=\sum_{i=1}^n g_i$:
    \begin{equation}\label{ineq:rearrange.general}
        \Vert f-g \Vert_2^2 \geq \Vert f \Vert_2^2 + \Vert g \Vert_2^2 
    \end{equation}  

\end{Theorem}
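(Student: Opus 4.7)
The plan is to expand $\|f-g\|_2^2$ and reduce the whole problem to a single nonpositivity estimate on the cross-term. Writing
\begin{equation*}
\|f-g\|_2^2 = \|f\|_2^2 + \|g\|_2^2 - 2\int_{[0,1]} fg\,d\mu,
\end{equation*}
the target inequality \eqref{ineq:rearrange.general} is equivalent to $\int_{[0,1]} fg\,d\mu \leq 0$, so once this single integral is controlled the theorem follows immediately.

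By linearity of integration, $\int_{[0,1]} fg\,d\mu = \sum_{i=1}^n \int_{[0,1]} f g_i\,d\mu$, and hence it is enough to establish $\int_{[0,1]} f g_i\,d\mu \leq 0$ for each $i$ separately. But this is precisely the estimate at the heart of Lemma \ref{LemmaResDis}: each pair $(f,g_i)$ satisfies its hypotheses (namely, $f$ is non-decreasing and nonnegative, $g_i$ is of decreasing block structure with respect to $0$, and $\int g_i\,d\mu = 0$), and the key step in the proof of Lemma \ref{LemmaResDis} is exactly the bound $I = \int f g_i\,d\mu \leq 0$. Should one prefer not to re-invoke the internals of that proof, the same nonpositivity can be read off by expanding the conclusion $\|f-g_i\|_2^2 \geq \|f\|_2^2 + \|g_i\|_2^2$. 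Summing these nonpositive quantities yields $\int fg\,d\mu \leq 0$, and substituting back into the identity above gives \eqref{ineq:rearrange.general}.

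The main obstacle, such as it is, is entirely conceptual rather than technical: one has to notice that the cross-term splits additively over the summands of $g$ and that each piece is covered exactly by the previously proved lemma, after which nothing but linearity of the integral is required. It is worth remarking that the disjoint-support hypothesis (3) plays no role in the cross-term estimate itself; rather, it is the natural structural condition for the rearrangement applications envisaged in the sequel, and (when invoked) additionally produces the orthogonal decomposition $\|g\|_2^2 = \sum_{i=1}^n \|g_i\|_2^2$, which is convenient for the subsequent passage from $m_{\pi,k}(\mathbf{p})$ to $m_{\hat\pi,\hat k}(\hat{\mathbf{p}})$.
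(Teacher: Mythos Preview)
Your proof is correct and follows essentially the same route as the paper: expand $\|f-g\|_2^2$, reduce to $\int fg\,d\mu\le 0$, split the cross-term as $\sum_i\int fg_i\,d\mu$, and invoke the key estimate $I\le 0$ from the proof of Lemma~\ref{LemmaResDis} for each summand. Your observation that hypothesis~(3) is not actually used in establishing \eqref{ineq:rearrange.general} is also accurate.
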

\begin{proof}
From the proof of the previous lemma we know that $\int_{[0,1]}fg_i d\mu \leq 0$ for every $i \in \{1,\ldots,n\}$. 
Having that and considering 
\begin{eqnarray*}
\int_{[0,1]} \left(f- \sum_{i=1}^n g_i \right)^2 d\mu &=& \int_{[0,1]} f^2 d\mu + 
                            \int_{[0,1]} \underbrace{\left(\sum_{i=1}^n g_i \right)^2}_{=g^2} d\mu \\
                & & \, -2 \sum_{i=1}^n \underbrace{\int_{[0,1]} f g_i \, d\mu}_{\leq 0} \\
                &\geq& \int_{[0,1]} f^2 d\mu + \int_{[0,1]} g^2 d\mu 
\end{eqnarray*}
yields the desired result.
\end{proof}

\begin{Remark}
\emph{
Theorem  \ref{thm:rearrange.general:sum} can further be generalized since sets of $\mu$-measure $0$ can be ignored. 
However, for tackling the lower inequality we will work with $\mu=\lambda$, and the version stated above will suffice.
}
\end{Remark}

Before proceeding with the proof of the lower inequality we illustrate with an example, why working with 
sums $\sum_{i=1}^n g_i$ is necessary for proving the lower inequality.
\begin{Example}\label{ex:rearr2}
\emph{Consider $N = 16$ and the symmetric permutation 
$\pi = (15, 16, 3, 14, 11, 12, 7, 8, 9, 10, 5, 6, 13, 4, 1, 2) \in \Sigma_{16}$, 
for which we have $I_\pi^-=\{11, 12, 14, 15, 16\}, k = 5$, and $\mathbf{p} = \frac{1}{16}\cdot (6, 6, 10, 14, 14)$.
Following the rearrangement idea from the previous example yields the symmetric permutation 
$\hat{\pi}=(16,15,14,13,5,6,7,8,9,12,11,10,4,3,2,1)$ with $I_{\hat{\pi}}=\{12,13,14,15,16\}$ (see Figure \ref{Ex2}).  
Setting 
$$
\hat{\mathbf{p}} = \tfrac{1}{16} \cdot (2,9,11,13,15)
$$
and defining the functions $f, \hat{f}: [0,1] \rightarrow [0,\infty)$ and $g:[0,1] \rightarrow \mathbb{R}$ by 
\begin{eqnarray*}
f(x) &=& \frac{5}{16} \sum_{i=1}^5 p_i \mathbf{1}_{(\tfrac{i-1}{5},\tfrac{i}{5}]}(x), \quad 
\hat{f}(x) = \frac{5}{16}\sum_{i=1}^5 \hat{p_i} \mathbf{1}_{(\tfrac{i-1}{5},\tfrac{i}{5}]}(x), \quad g:=f-\hat{f}
\end{eqnarray*} 
yields
$$
g(x)=\tfrac{5}{16} \left( 4\cdot \mathbf{1}_{(0,\frac{1}{5}]}(x) - 3 \cdot \mathbf{1}_{(\frac{1}{5},\frac{2}{5}]}(x)
-1 \cdot \mathbf{1}_{(\frac{2}{5},\frac{3}{5}]}(x) + 1 \cdot \mathbf{1}_{(\frac{3}{5},\frac{4}{5}]}(x)
 - \mathbf{1}_{(\frac{4}{5},1]}(x) \right).
$$
Obviously $g$ fulfills $\int_{[0,1]} g \,d \lambda=0$ but is not of decreasing block structure w.r.t. $0$.
It is, however, the sum of following two functions $g_1,g_2$ of decreasing block structure and with integral $0$: 
\begin{eqnarray*}
g_1(x)&=& \tfrac{5}{16} \left( 4 \cdot \mathbf{1}_{[0,\frac{1}{5}]}(x) - 3 \cdot \mathbf{1}_{(\frac{1}{5},\frac{2}{5}]}(x)
-1 \cdot \mathbf{1}_{(\frac{2}{5},\frac{3}{5}]}(x) \right) \\
g_2(x) &=& \tfrac{5}{16} \left( 1 \cdot \mathbf{1}_{(\frac{3}{5},\frac{4}{5}]}(x)
 - \mathbf{1}_{(\frac{4}{5},1]}(x) \right).
\end{eqnarray*}   
As a direct consequence, ineq. (\ref{ineq:rearrange.general}) holds and we have 
$\Vert f-g \Vert_2^2 \geq \Vert f \Vert_2^2 + \Vert g \Vert_2^2$ with $g=g_1+g_2$, which in turn shows that 
$\rho(S_\pi) > \rho(S_{\hat{\pi}})$ as well as 
$$
m_{\pi,k}(\textbf{p}) >  m_{\hat{\pi},\hat{k}}(\hat{\textbf{p}})
$$
}
\end{Example} 

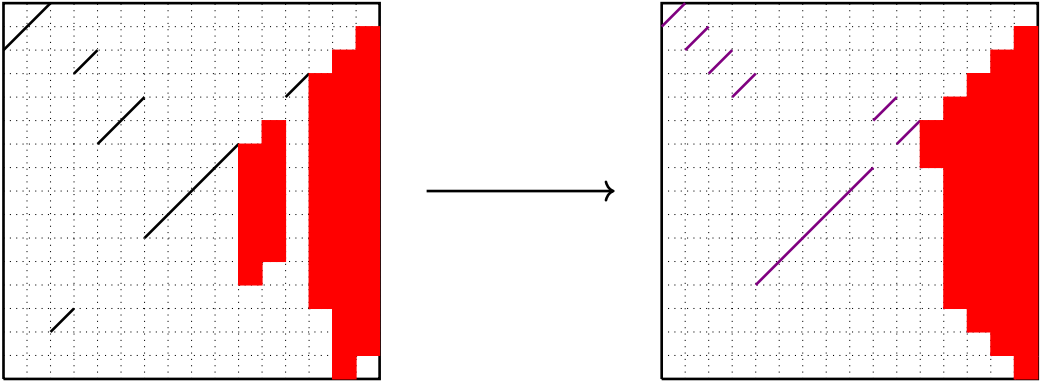
\begin{figure}[H]
        \centering
        \begin{tikzpicture}[scale=5]
        	\draw[-,line width=1] (0,0) -- (0,1) -- (1,1) -- (1,0) -- (0,0);
                \draw[step=1/16,black,thin,xshift=0cm,yshift=0cm,dotted] (0,0) grid (1,1);
    
                \draw[-,line width=1] (2/16,2/16) -- (3/16,3/16);
                \draw[-,line width=1] (6/16,6/16) -- (10/16,10/16);
                \draw[-,line width=1] (12/16,12/16) -- (13/16,13/16);
    
                \draw[-,line width=1] (10/16,4/16) -- (12/16,6/16);
                \draw[-,line width=1] (13/16,3/16) -- (14/16,4/16);
                \draw[-,line width=1] (14/16,0/16) -- (16/16,2/16);
    
                \draw[-,line width=1] (4/16,10/16) -- (6/16,12/16);
                \draw[-,line width=1] (3/16,13/16) -- (4/16,14/16);
                \draw[-,line width=1] (0/16,14/16) -- (2/16,16/16);
    
                \draw [fill=red, draw=red, nearly transparent] (10/16,4/16) rectangle (11/16, 10/16); 
                \draw [fill=red, draw=red, nearly transparent] (11/16,5/16) rectangle (12/16, 11/16); 
                \draw [fill=red, draw=red, nearly transparent] (13/16,3/16) rectangle (14/16, 13/16); 
                \draw [fill=red, draw=red, nearly transparent] (14/16,0/16) rectangle (15/16, 14/16); 
                \draw [fill=red, draw=red, nearly transparent] (15/16,1/16) rectangle (16/16, 15/16);
    
                \draw[->,line width=1] (9/8,1/2) -- (13/8,1/2);
    
                \draw[-,line width=1] (28/16,0) -- (28/16,1) -- (44/16,1) -- (44/16,0) -- (28/16,0);
    
                \draw[step=1/16,black,thin,xshift=0cm,yshift=0cm,dotted] (28/16,0) grid (44/16,1);
    
                \draw[-,line width=1, violet] (32/16,4/16) -- (37/16,9/16);
                \draw[-,line width=1, violet] (38/16,10/16) -- (39/16,11/16);
    
                \draw[-,line width=1, violet] (39/16,9/16) -- (40/16,10/16);
                \draw[-,line width=1, violet] (40/16,3/16) -- (41/16,4/16);
                \draw[-,line width=1, violet] (41/16,2/16) -- (42/16,3/16);
                \draw[-,line width=1, violet] (42/16,1/16) -- (43/16,2/16);
                \draw[-,line width=1, violet] (43/16,0/16) -- (44/16,1/16);
    
                \draw[-,line width=1, violet] (37/16,11/16) -- (38/16,12/16);
                \draw[-,line width=1, violet] (31/16,12/16) -- (32/16,13/16);
                \draw[-,line width=1, violet] (30/16,13/16) -- (31/16,14/16);
                \draw[-,line width=1, violet] (29/16,14/16) -- (30/16,15/16);
                \draw[-,line width=1, violet] (28/16,15/16) -- (29/16,16/16);
    
                \draw [fill=red, draw=red, nearly transparent] (43/16,0/16) rectangle (44/16, 15/16);
                \draw [fill=red, draw=red, nearly transparent] (42/16,1/16) rectangle (43/16, 14/16);
                \draw [fill=red, draw=red, nearly transparent] (41/16,2/16) rectangle (42/16, 13/16);
                \draw [fill=red, draw=red, nearly transparent] (40/16,3/16) rectangle (41/16, 12/16);
                \draw [fill=red, draw=red, nearly transparent] (39/16,9/16) rectangle (40/16, 11/16);
        \end{tikzpicture}
        \caption{The two shuffles considered in Example \ref{ex:rearr2}}\label{Ex2}
    \end{figure}

We finally return to proving ineq. \eqref{LowerBound} for symmetric shuffles we apply Lemma \ref{LemmaResDis} and  
proceed in two steps: (i) Using the rearrangement idea reduce the problem to a handy subclass of symmetric shuffles.
(ii) Prove the ine\-quality for all elements of the subclass and again use the fact, that symmetric shuffles 
are dense (Lemma \ref{DenseSymShuffles}). 

The subclass consists of all symmetric shuffles/permutations of the type $\hat{\pi}$ considered in the two 
previous examples. We will define these permutations $\pi$ only on the sets $I_\pi^-$ since, using 
symmetry their extension to $I_\pi^+$ and $I_\pi^0$ is unique.   
In what follows, $N\in\N$ will be arbitrary but fixed. For each such $N$ define
\begin{align*}
    \hat{\Sigma}_N^1 &:= \{\hat{\pi} \in \Sigma_N: \hat{\pi}\text{ symmetric, } I_{\hat{\pi}}^- = \{N\} \text{ and } \hat{\pi}(N) \in \{1, \dots, N-1\} \} \\
    \hat{\Sigma}_N^2 &:= \Big\{\hat{\pi} \in \Sigma_N: \hat{\pi} \text{ symmetric, } \hat{k} \geq 2, 
    I_{\hat{\pi}}^- =  \{N - \hat{k} + 1, N - \hat{k} + 2 ,\ldots, N\} \\&
    \quad \quad \text{ such that } \hat{\pi}(N - \hat{k} + 1) \in \{1, \dots, N - (2\hat{k}-1)\} \text{ and }
     \\& \quad \quad \, \hat{\pi}(N - i + 1) = i \text{ for } i \in \{1, \dots, \hat{k} -1 \} \Big\},
\end{align*}
set $\hat{\Sigma}_N := \hat{\Sigma}_N^1 \cup \hat{\Sigma}_N^2$, and let $\hat{\mathcal{S}}_N, \hat{\mathcal{S}}_N^1, 
 \hat{\mathcal{S}}_N^2 \subseteq \mathcal{S}_N^{sym}$ denote the corresponding families of shuffles. 
The following result formalizes the afore-mentioned reduction and shows that it suffices to prove  
ineq. \eqref{LowerBound} for all shuffles in $\hat{\mathcal{S}}_N$.

\begin{Lemma}
    Suppose that $N \in \N$ with $N\geq4$ and that $S_\pi \in \mathcal{S}_N^{sym}$. Then there exists some  
    shuffle $S_{\hat{\pi}} \in \hat{\mathcal{S}}_N$ such that 
    $m_{\pi, k}(\textbf{p}) \geq m_{\hat{\pi}, \hat{k}}(\hat{\textbf{p}})$ holds.
\end{Lemma}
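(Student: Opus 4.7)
The plan is to obtain $S_{\hat\pi}$ from $S_\pi$ by repeatedly applying the rearrangement results of Lemma~\ref{LemmaResDis} and Theorem~\ref{thm:rearrange.general:sum}, each step preserving $\sum p_i$ while not decreasing $\sum p_i^2$, hence not increasing $m$. I would proceed in two phases, motivated directly by Examples~\ref{ex:rearrange} and~\ref{ex:rearr2}.

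\textbf{Phase 1: Relocating $I_\pi^-$ to the top.} Given the ordered jumps $0 < p_1 \le \cdots \le p_k$, I would encode $\mathbf{p}$ by the non-decreasing step function $f(x) := \frac{k}{N}\sum_{j=1}^{k} p_j\,\mathbf{1}_{((j-1)/k,\,j/k]}(x)$ on $[0,1]$. Shifting the off-diagonal mass of $S_\pi$ as far to the right as possible (as in Example~\ref{ex:rearrange}) produces a new symmetric permutation $\pi'$ with $I_{\pi'}^- \subseteq \{N-k+1,\dots,N\}$ and an ordered jump vector $\mathbf{p}'$ (of possibly smaller length $k'\le k$, padded with zeros at the front to match length $k$). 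The difference $g:=f-f'$ is then a sum $\sum_j g_j$ of step functions of decreasing block structure with respect to $0$, each integrating to zero and with pairwise disjoint supports. Theorem~\ref{thm:rearrange.general:sum} then yields $\|f'\|_2^2 = \|f-g\|_2^2 \ge \|f\|_2^2 + \|g\|_2^2 \ge \|f\|_2^2$, i.e., $\sum (p'_i)^2 \ge \sum p_i^2$, while $\sum p'_i = \sum p_i$ is kept thanks to $\int g\,d\lambda = 0$. This already gives $m_{\pi,k}(\mathbf{p}) \ge m_{\pi',k'}(\mathbf{p}')$.

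\textbf{Phase 2: Maximising the gaps within the top block.} Starting from $\pi'$ with $I_{\pi'}^-$ a rightmost block, I would next force the $\hat k - 1$ largest indices of $I^-$ to map as far down as possible, i.e., $\hat\pi(N-i+1) = i$ for $i \in \{1,\dots,\hat k - 1\}$, leaving only $\hat\pi(N-\hat k+1)$ as a free parameter. Each elementary swap that enlarges a large gap at the cost of a strictly smaller gap preserves $\sum p_i$ and is again captured by a zero-integral $g$ of decreasing block structure, so Lemma~\ref{LemmaResDis} applies and $\sum p_i^2$ does not decrease. Iterating until no further spread is feasible (possibly forcing $k'$ to drop further as indices become fixed points) yields $\hat\pi \in \hat\Sigma_N^1 \cup \hat\Sigma_N^2 = \hat\Sigma_N$, and combining the two phases gives $S_{\hat\pi} \in \hat{\mathcal{S}}_N$ with $m_{\pi,k}(\mathbf{p}) \ge m_{\hat\pi,\hat k}(\hat{\mathbf{p}})$.

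\textbf{Main obstacle.} The core technical work is the bookkeeping of symmetry and of the block-structure hypothesis under iterated rearrangements. After each shift or swap one must (a) verify that the modified permutation remains symmetric, which follows once the shift is performed symmetrically on $I^-$ and $I^+$, and (b) reorder the jumps $p_i$ so that the associated step function is again non-decreasing before the next application. When two blocks collide (so that $k' < k$), the vector $\mathbf{p}'$ has one fewer effective entry, yet the area $\sum p_i$ is preserved; Theorem~\ref{thm:rearrange.general:sum} (rather than just Lemma~\ref{LemmaResDis}) is exactly what is needed at such a step to handle the multi-block $g$ with disjoint supports. Verifying that this decomposition is always available, for every possible configuration of $\pi$ and every intermediate stage, is the subtle part of the argument.
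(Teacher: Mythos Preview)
Your approach uses the same machinery as the paper (encoding $\mathbf p$ by the non-decreasing step function $f$ and invoking Theorem~\ref{thm:rearrange.general:sum}), but the paper dispenses with the two-phase iteration: given $\Delta=N\sum_i p_i$ it computes $\ell=\max\{j:\sum_{i=1}^j(N-(2i-1))\le\Delta\}$ (with $\ell=0$ if $\Delta<N-1$), writes down $\hat\pi\in\hat\Sigma_N$ and the padded vector $\hat{\mathbf p}$ explicitly from $\ell$ and the remainder $\Delta-\Delta^*$, and applies Theorem~\ref{thm:rearrange.general:sum} a single time to $g=f-\hat f$. What you describe as Phase~1, citing Example~\ref{ex:rearrange}, \emph{is} this one-shot construction and already lands in $\hat\Sigma_N$, so Phase~2 is superfluous; the obstacle you correctly isolate---that $g$ decomposes into disjoint-support pieces each of decreasing block structure---is the same claim the paper also asserts without a detailed argument.
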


\begin{proof}
    Let $N$ be as in the theorem, $\pi \in \Sigma_N^{sym}$ be arbitrary but fixed, and $k = \# I_\pi^-$. 
     Setting 
    \begin{align*}
        f(x) = \frac{k}{N} \sum_{i=1}^k p_i \mathbf{1}_{(\tfrac{i-1}{k},\tfrac{i}{k}]}(x)
    \end{align*}
    with $\textbf{p} \in [0,1]^k$ corresponding to the shuffle $S_\pi$, it follows that $f$ is non-decreasing.
    Writing $\Delta = N \sum_{i=1}^k p_i$ define 
    \begin{align*}
        \ell := 
        \begin{cases}
        0, &\text{if } \Delta < N - 1, \\
        \max\{ j \in \{1, \dots, k\}: \sum_{i=1}^j N - (2i - 1) \leq \Delta \}, &\text{else.} 
    \end{cases}
    \end{align*} 
    (i) In case of $\ell = 0$ we consider the symmetric permutation 
    $\hat{\pi} \in \hat{\Sigma}_N^1$ with $I_{\hat{\pi}}^- = \{N\}$ and $\hat{\pi}(N) := N - \Delta$. 
    Then $\hat{p} = \frac{\Delta}{N}$ and setting 
    \begin{align*}
        \hat{f}(x) = \frac{k}{N}\; \hat{p} \; \mathbf{1}_{(\tfrac{k-1}{k},1]}(x)
    \end{align*}
    yields that $g := f - \hat{f}$ satisfies $\int_{[0,1]} g d \lambda = 0$ and $g$ is of decreasing block 
    structure w.r.t. $0$. Applying Lemma \ref{LemmaResDis} yields $\Vert f - g \Vert_2^2 \geq \Vert f \Vert_2^2$ 
    or, equivalently, $\rho(S_{\pi}) \geq \rho(S_{\hat{\pi}})$. By construction we have $\phi(S_{\pi}) = \phi(S_{\hat{\pi}})$ and therefore $m_{\pi, k}(\textbf{p}) \geq m_{\hat{\pi}, 1}(\hat{p})$ holds.
    
    (ii) Let $\ell \geq 1$ and set $\Delta^\ast = \sum_{i=1}^\ell N - (2i - 1)$. 
    (a) Suppose that $\Delta - \Delta^\ast >0$. Setting $\hat{k} := \ell + 1 \leq k$ we consider $\hat{\pi} \in \Sigma_N$ with $I_{\hat{\pi}}^- = \bigcup_{i=1}^{\hat{k}} \{N - i + 1 \}$, $\hat{\pi}(N - \hat{k} + 1) = N - \hat{k} + 1 - (\Delta - \Delta^\ast)$ and $\hat{\pi}(N - i + 1) = i$ for $i \in \{1, \dots, \hat{k} -1 \}$. 
    Then $\hat{\pi} \in \hat{\Sigma}_N^2$ with $\hat{\textbf{p}} = \left( \tfrac{\Delta - \Delta^\ast}{N}, \tfrac{N - (2\ell-1)}{N}, \dots, \tfrac{N-1}{N}\right)$ and, by construction, we have $\phi(S_{\pi}) = \phi(S_{\hat{\pi}})$. Setting 
    \begin{align*}
        \hat{f}(x) = \frac{k}{N} \sum_{i=k - \hat{k} + 1}^k \hat{p}_i \mathbf{1}_{(\tfrac{i-1}{k},\tfrac{i}{k}]}(x)
    \end{align*}
    yields that $g := f - \hat{f}$ satisfies $\int_{[0,1]} g d \lambda = 0$. Considering that $g$ 
    can be expressed as finite sum of functions $g_i$ of decreasing block structure w.r.t. $0$ and $g_i g_j=0$ 
    whenever $i \neq j$, applying Theorem \ref{thm:rearrange.general:sum} yields 
    $\Vert f - g \Vert_2^2 \geq \Vert f \Vert_2^2$, i.e., $\rho(S_{\pi}) \geq \rho(S_{\hat{\pi}})$ and 
    $m_{\pi, k}(\textbf{p}) \geq m_{\hat{\pi}, \hat{k}}(\hat{\textbf{p}})$. \\    
    (b) Finally, suppose that $\Delta - \Delta^\ast = 0$. Set $\hat{k} := \ell$ and consider 
    $\hat{\pi} \in \Sigma_N$ with $I_{\hat{\pi}}^- = \bigcup_{i=1}^{\hat{k}} \{N - i + 1 \}$ and $\hat{\pi}(N - i + 1) = i$ 
    for $i \in \{1, \dots, \hat{k}\}$. Then $\hat{\pi} \in \hat{\Sigma}_N^2$ with 
    $\hat{\textbf{p}} = \left( \tfrac{N - (2k-1)}{N}, \dots, \tfrac{N-1}{N}\right)$. Setting
    \begin{align*}
        \hat{f}(x) = \frac{k}{N} \sum_{i=1}^k \hat{p}_i \mathbf{1}_{(\tfrac{i-1}{k},\tfrac{i}{k}]}(x).
    \end{align*}
    as well as $g := f - \hat{f}$ and proceeding analogously to the previous case concludes the proof. 
\end{proof} 

As second and ultimate step we show that ineq. \eqref{LowerBound} holds for all shuffles 
within the subclass $\hat{\mathcal{S}}_N$.
To this end, define $M_{N,k} := \{\textbf{x} \in [0,1]^k: \frac{1}{N} \sum_{i=1}^k x_i \leq 1/4 \}$ and interpret 
$m_{\pi,k}$ as function on $M_{N,k}$. 

\begin{Theorem}\label{LowerboundShuffleSubclass}
    For every shuffle $S_{\hat{\pi}} \in \hat{\mathcal{S}}_N$ with resolution $N\in\N$ and $N\geq 4$ the following inequality holds:
    \begin{align*}
        \rho(S_{\hat{\pi}}) \geq \frac{2}{9} \sqrt{3} \left( 1 + 2\phi(S_{\hat{\pi}}) \right)^{3/2} - 1.
    \end{align*}
\end{Theorem}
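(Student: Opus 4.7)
The plan is to parametrize every $\hat{\pi} \in \hat{\mathcal{S}}_N$ by a single integer $c$, reduce the claim via Lemma \ref{FormulasRhoPhi} to a one-variable inequality $v(c) \geq u(c)^{3/2}$ with $u, v$ explicit polynomials, and verify it by a short concavity argument. Concretely, for $\hat{\pi} \in \hat{\mathcal{S}}_N^1$ I put $c := N - \hat{\pi}(N) \in \{1,\dots,N-1\}$, and for $\hat{\pi} \in \hat{\mathcal{S}}_N^2$ I put $c := (N-\hat{k}+1) - \hat{\pi}(N-\hat{k}+1) \in \{1,\dots,N-2\hat{k}+1\}$. Writing $m := \hat{k}-1$ and $M := N-2m$, the parameter $c$ lies in $\{1,\dots,M-1\}$ uniformly in both cases.

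Applying Lemma \ref{FormulasRhoPhi} together with the elementary identities
$$
N^2 - 4m(N-m) \;=\; M^2, \qquad N^3 - 6\sum_{j=1}^{m}(N-2j+1)^2 \;=\; M^3 + 2m
$$
(the second of which follows from the closed form $\sum_{j=1}^{m}(N-2j+1)^2 = mN^2 - 2m^2 N + m(4m^2-1)/3$) yields the simplifications
$$
N^2\Bigl(1 - \tfrac{4}{N}\sum_{i=1}^{\hat{k}}\hat{p}_i\Bigr) \;=\; M^2 - 4c \;=:\; u(c), \qquad N^3\Bigl(1 - \tfrac{6}{N}\sum_{i=1}^{\hat{k}}\hat{p}_i^2\Bigr) \;=\; M^3 + 2m - 6c^2 \;=:\; v(c).
$$
Consequently the target $m_{\hat{\pi},\hat{k}}(\hat{\mathbf{p}}) \geq 0$ from \eqref{def:f.lower}, which is equivalent to the theorem, reformulates as $v(c) \geq u(c)^{3/2}$ for every integer $c \in \{1,\dots,M-1\}$.

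Define $\varphi(c) := v(c) - u(c)^{3/2}$ on $[0, M^2/4]$ (the range where $u \geq 0$). A direct computation gives $\varphi''(c) = -12 - 12(M^2-4c)^{-1/2} < 0$, so $\varphi$ is strictly concave on this interval. Since $(M-2)^2 \geq 0$, the parameter range $[0, M-1]$ is contained in $[0, M^2/4]$, and elementary algebra yields the endpoint values $\varphi(0) = 2m \geq 0$ and $\varphi(M-1) = M^3 + 2m - 6(M-1)^2 - (M-2)^3 = 2\hat{k} > 0$. By concavity, $\varphi$ dominates its affine interpolant between these endpoints on $[0, M-1]$, which equals $2m + 2c/(M-1)$ and is strictly positive on $(0, M-1]$ because $\varphi(M-1) > 0$. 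Hence $\varphi(c) > 0$ for every integer $c \in \{1,\dots,M-1\}$, establishing the theorem.

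The main conceptual work is identifying the substitution $M = N - 2(\hat{k}-1)$ and verifying the two collapsing identities above; once this parametrization is in place, the concavity argument handles both subfamilies of $\hat{\mathcal{S}}_N$ simultaneously. The anticipated obstacle of simultaneously managing the three integer parameters $(N,\hat{k},c)$ dissolves under this substitution, reducing the problem to a short one-variable calculus exercise anchored at the clean endpoint values $2m$ and $2\hat{k}$.
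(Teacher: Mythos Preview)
Your proof is correct and follows essentially the same approach as the paper: establish concavity of the relevant function in the single free parameter and evaluate at the two endpoints. Your substitution $M=N-2(\hat{k}-1)$ is a nice touch that unifies the two subfamilies $\hat{\mathcal{S}}_N^1$ and $\hat{\mathcal{S}}_N^2$ and yields the clean exact endpoint values $\varphi(0)=2m$ and $\varphi(M-1)=2\hat{k}$, whereas the paper treats the two cases separately and in case~(ii) uses a slightly indirect strict inequality; but the underlying idea is identical.
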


\begin{proof}
    We show that under the assumptions of the theorem 
    $m_{\hat{\pi}, \hat{k}}(\hat{\textbf{p}}) \geq 0$ holds and proceed as follows: 
    For $\textbf{x} \in M_{N, \hat{k}}$, calculating the first and second partial derivative w.r.t. $x_j$ 
    yields
    \begin{align*}
        \frac{\partial}{\partial x_j} m_{\hat{\pi}, \hat{k}}(\textbf{x}) &= - \frac{12}{N} x_j + \frac{6}{N} \sqrt{1 - \frac{4}{N} \sum_{i = 1}^{\hat{k}} x_i} = 
        \frac{6}{N} \left( \sqrt{1 - \frac{4}{N} \sum_{i = 1}^{\hat{k}} x_i} - 2x_j \right)
    \end{align*}
    as well as
    \begin{align*}
          \frac{\partial^2}{\partial^2 x_j} m_{\hat{\pi}, \hat{k}}(\textbf{x}) = - \frac{12}{N} \left( \frac{1}{N \sqrt{1 - \frac{4}{N} \sum_{i = 1}^{\hat{k}} x_i}} + 1 \right) < 0
    \end{align*}
    for every $j \in \{1, \dots, k\}$. As a direct consequence, 
    the mapping $$t \mapsto m_{\hat{\pi}, \hat{k}}(x_1, \dots, x_{j-1}, t, x_{j+1}, \dots, x_k) $$ is 
    concave on the convex polytope $M_{N,\hat{k}}$. \\
    We again distinguish two cases: (i) if $S_{\hat{\pi}} \in \hat{\mathcal{S}}_N^1$ then $\hat{k} = 1$ and concavity of $m_{\hat{\pi}, 1}$ implies that $m_{\hat{\pi}, 1}(\hat{p}) \geq m_{\hat{\pi}, 1}(0) = 0$ or $m_{\hat{\pi}, 1}(\hat{p}) \geq m_{\hat{\pi}, 1}(\tfrac{N-1}{N})$. A straightforward calculation yields
    \begin{align*}
        N^3 m_{\hat{\pi}, 1}(\tfrac{N-1}{N}) &= N^3 \left( 1 - \frac{6(N-1)^2}{N^3} - \left(1 - \frac{4(N-1)}{N^2}\right)^{3/2} \right) \\&= N^3 - 6(N-1)^2 - \left(N^2 - 4(N-1) \right)^{3/2} \\&=
        N^3 - 6(N-1)^2 - (N-2)^3 = 2,
    \end{align*}
    implying the assertion for $k=1$. \\
    (ii) In case that $S_{\hat{\pi}} \in \hat{\mathcal{S}}_N^2$ we have $\hat{\textbf{p}} = \left( \tfrac{j}{N}, \tfrac{N - (2(\hat{k}-1)-1)}{N}, \dots, \tfrac{N-1}{N}\right)$ with $j \in \{1, \dots, N - (2\hat{k}-1)\}$. Now, setting 
    \begin{align*}
        \hat{\textbf{p}}_0 &:= \left( 0, \tfrac{N - (2(\hat{k}-1)-1)}{N}, \dots, \tfrac{N-1}{N}\right) \in M_{N,\hat{k}} \\
        \hat{\textbf{p}}_1 &:= \left( \tfrac{N - (2\hat{k}-1)}{N}, \tfrac{N - (2(k-1)-1)}{N}, \dots, \tfrac{N-1}{N}\right) \in M_{N,\hat{k}}
    \end{align*}
    concavity of $m_{\hat{\pi}, \hat{k}}$ in each coordinate implies that $m_{\hat{\pi}, \hat{k}}(\hat{\textbf{p}}) \geq m_{\hat{\pi}, \hat{k}}(\hat{\textbf{p}}_j)$ holds for $j \in \{0, 1\}$. 
    Therefore, considering $\hat{k}_j := \hat{k} + j - 1 $ and $\hat{k}_j \leq N/2$ it follows that 
    \begin{align*}
         1 - \frac{4}{N} \sum_{i = 1}^{\hat{k}} \hat{p}_{ji} = 1 - \frac{4}{N}  \sum_{i=1}^{\hat{k}_j} \frac{N - (2i - 1)}{N} = \frac{(N-2\hat{k}_j)^2}{N^2}.
    \end{align*}
    as well as 
    \begin{align*}
        1 - \frac{6}{N} \sum_{i = 1}^{\hat{k}} \hat{p}_{ji}^2 > 1 - \frac{6}{N} \sum_{i=1}^{\hat{k}_j} \frac{(N - (2i - 1))^2}{N^2} - \frac{2\hat{k}_j}{N^3} = \frac{(N-2\hat{k}_j)^3}{N^3}.
    \end{align*}
    Therefore $m_{\hat{\pi}, \hat{k}}(\hat{\textbf{p}}_j) > 0$ and the proof is complete. 
\end{proof}

Following the idea of the proof of Theorem \ref{LowerboundShuffleSubclass} the copula $C_\alpha$ depicted in Figure \ref{LowerBoundSharp} for $\alpha \in [0,\frac{1}{2}]$ seems a very natural suspect for attaining the lower bound.

\begin{figure}[H]
    \centering
    \begin{subfigure}{.5\textwidth}
        \centering
        \begin{tikzpicture}[scale=4]
            \draw[-,line width=1] (0,0) -- (0,1) -- (1,1) -- (1,0) -- (0,0);
            \draw[-,line width=1] (4/12, 4/12) -- (8/12, 8/12);
            \draw[-,line width=1] (0,1) -- (4/12, 8/12);
            \draw[-,line width=1] (8/12, 4/12) -- (1,0);
            \draw[-,line width=1] (4/12,0) -- (4/12,-0.02);
    	\node[below=1pt of {(4/12,-0.02)}, outer sep=2pt,fill=white]{$\alpha$};
            \draw[-,line width=1] (8/12,0) -- (8/12,-0.02);
    	\node[below=1pt of {(8/12,-0.02)}, outer sep=2pt,fill=white]{$1-\alpha$};
            \draw[-,line width=1] (0,4/12) -- (-0.02,4/12);
    	\node[left=1pt of {(-0.02,4/12)}, outer sep=2pt,fill=white] {$\alpha$};
            \draw[-,line width=1] (0,8/12) -- (-0.02,8/12);
    	\node[left=1pt of {(-0.02,8/12)}, outer sep=2pt,fill=white] {$1-\alpha$};
	\end{tikzpicture}
    \end{subfigure}%
    \begin{subfigure}{.5\textwidth}
        \centering
        \begin{tikzpicture}[scale=4]
    	\draw[-,line width=1] (0,0) -- (4/12,0) -- (8/12, 4/12) -- (1,1);
    	\draw[->,line width=1] (-0.1,0) -- (1.1,0);
            \draw[->,line width=1] (0,-0.1) -- (0,1.1);
            \draw[-,line width=1, dotted] (0,0) -- (1,1);
	    \draw[-,line width=1] (4/12,0) -- (4/12,-0.02);
	    \node[below=1pt of {(4/12,-0.02)}, scale= 0.75, outer          sep=2pt,fill=white] {$a$};
            \draw[-,line width=1] (1/2,0) -- (1/2,-0.02);
	    \node[below=1pt of {(1/2,-0.02)}, scale= 0.75, outer          sep=2pt,fill=white] {$\frac{1}{2}$};
        \end{tikzpicture}
    \end{subfigure}
    \caption{The copula $C_\alpha$ (left panel) for which the lower inequality is sharp, and its corresponding 
    diagonal $\delta_\alpha$ (right panel).}\label{LowerBoundSharp}
\end{figure}
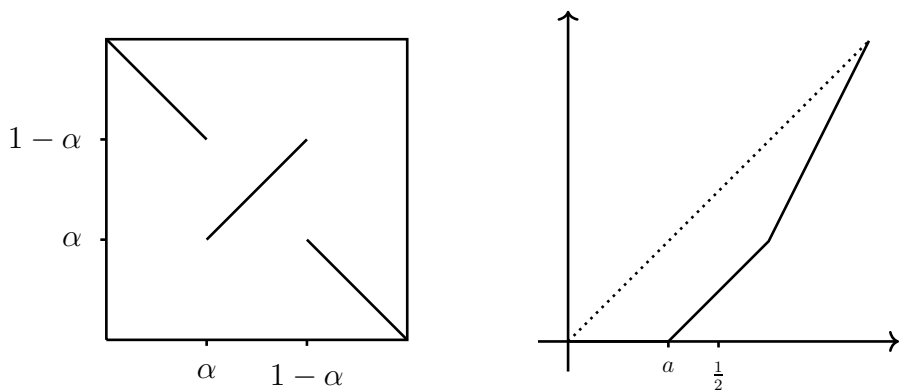
In fact, it was already shown in \cite{bukovvsek2024exact} that for this copula
\begin{align*}
    \phi(C_\alpha) &= 6\alpha^2 - 6\alpha + 1,\\
    \rho(C_\alpha) &= -16\alpha^3 + 24\alpha^2 - 12\alpha + 1.
\end{align*}
holds, so $ \rho(C_\alpha) = \frac{2}{9} \sqrt{3} \left( 1 + 2\phi(C_\alpha) \right)^{3/2} - 1 $ 
for every $\alpha \in [0,\frac{1}{2}]$.

\clearpage
\section{Getting closer to the non-sharp upper bound}\label{SecImprovementUpperBound}

We now return to the upper bound for $\Omega_{\phi,\rho}$ which was already shown not to be (globally) sharp 
by the authors in \cite{bukovvsek2024exact}. The natural question therefore is, how 
non-sharp it is, i.e., how close one can get to the upper bound. In \cite{bukovvsek2024exact}
the authors also tackled this question, derived the function $r: [-\frac{1}{2},1] \rightarrow [-1,1]$, defined by
\begin{align*}
    r(x) =
    \begin{cases}
        2x + \frac{1}{2} - \frac{\sqrt{3}}{9} (1+2x)^{\frac{3}{2}}, &\text{if } x \in[-\frac{1}{2}, -\frac{1}{8}] \\
        \frac{4}{3}x + \frac{7}{24}, &\text{if } x  \in[-\frac{1}{8},\frac{1}{4}] \\
        \frac{2n+1}{n^2+n}x + \frac{2n^2-2n+1}{2n^2+2n} &\text{if } x \in[1-\frac{3}{2n}, 1-\frac{3}{2(n+1)}] \\
        1  &\text{if } x =1
    \end{cases}
\end{align*}
and showed that for every point $(x,r(x))$ there exists some copula $C$ with $\phi(C)=x$ and $\rho(C)=r(x)$. 
Notice that $r$ is piecewise linear on the interval $[-\frac{1}{8},1]$ and is, for arbitrary $x \in [-\frac{1}{2},1]$ 
quite close to the upper bound given by $1-\frac{2}{3}(1-x)^2$. 

The goal of this section is to prove that on the interval $[-\frac{1}{8},1]$ - outside the countably many points 
on which the upper inequality is known to be sharp - the function $r$ can be exceeded. 
Using convexity of $\Omega_{\phi,\rho}$ we will work with even $n \in \mathbb{N}$ and the 
shuffles $S_n^\ast:=S_{\pi^\ast}$ with $\pi^\ast \in \Sigma_n$ fulfilling  
$i - \pi^\ast(i) = 1$ for every $i \in I_{\pi^\ast}^- =  \{i \in \{1, \dots, n\}: i \text{ even}\}$. 
As mentioned in Section 3, these shuffles
constitute points at which the upper inequality is sharp (see Figure \ref{FigShuffleUpperBound} for an example). 
Our idea consists in `interpolating' between two consecutive shuffles $S_n^\ast$ and $S_{n+2}^\ast$.
Illustrating the approach we start with the pair $S_2^\ast$ and $S_4^\ast$ and then extend to 
$S_n^\ast$ and $S_{n+2}^\ast$ for arbitrary even $n \in \mathbb{N}$.

\subsection{Special case: Interpolating between $S_2^\ast$ and $S_4^\ast$}
We show that for every $x \in (-\frac{1}{8}, \frac{1}{4})$ there exists some copula $C$ with $\phi(C)=x$
fulfilling that $\rho(C) > r(x)$ and proceed as follows:

Given $a\in[\frac{1}{4}, \frac{1}{2}]$ and $b\in[0, \frac{1}{4}]$ define the two diagonals $\delta_a^\uparrow$ 
and $\delta_b^\downarrow$ by (see Figure \ref{Interpol1} and \ref{Interpol2} for an illustration) 
\begin{align*}
    \delta_a^\uparrow(x) =
    \begin{cases}
        0, &\text{if } x \in[0, a] \\
        x-a, &\text{if } x \in[a, 1-a] \\
        2x-1 &\text{if } x \in[1-a, 1]
    \end{cases}
\end{align*}
and 
\begin{align*}
     \delta_b^\downarrow(x) =
    \begin{cases}
        0, &\text{if } x \in[0, \frac{1}{4}] \\
        2(x-\frac{1}{4}), &\text{if } x \in[\frac{1}{4}, \frac{1}{4} + b] \\
        x + b - \frac{1}{4}, &\text{if } x \in[\frac{1}{4} + b, \frac{3}{4} - b] \\
        \frac{1}{2}, &\text{if } x \in[\frac{3}{4} - b, \frac{3}{4}] \\
        2x-1 &\text{if } x \in[\frac{3}{4}, 1]
    \end{cases}
\end{align*}
and consider the corresponding diagonal copulas $E_{\delta_a^\uparrow}$ and $E_{\delta_b^\downarrow}$

\begin{figure}[H]
    \centering
    \begin{subfigure}{.5\textwidth}
        \centering
        \begin{tikzpicture}[scale=4]
    	\draw[-,line width=1] (0,0) -- (4/12,0) -- (8/12, 4/12) -- (1,1);
    	\draw[->,line width=1] (-0.1,0) -- (1.1,0);
            \draw[->,line width=1] (0,-0.1) -- (0,1.1);
            \draw[-,line width=1, dotted] (0,0) -- (1,1);
	    \draw[-,line width=1] (4/12,0) -- (4/12,-0.02);
	    \node[below=1pt of {(4/12,-0.02)}, scale= 0.75, outer          sep=2pt,fill=white] {$a$};
            \draw[-,line width=1] (1/2,0) -- (1/2,-0.02);
	    \node[below=1pt of {(1/2,-0.02)}, scale= 0.75, outer          sep=2pt,fill=white] {$\frac{1}{2}$};
        \end{tikzpicture}
    \end{subfigure}%
    \begin{subfigure}{.5\textwidth}
        \centering
        \begin{tikzpicture}[scale=4]
    	\draw[-,line width=1] (0,0) -- (0,1) -- (1,1) -- (1,0) -- (0,0);
            \draw[-,line width=1, black] (0,4/12) -- (2/12,8/12) -- (4/12,10/12) -- (8/12, 1);
            \draw[-,line width=1, black] (4/12,0) -- (8/12,2/12) -- (10/12,4/12) -- (1, 8/12);
	    \draw[-,line width=1] (4/12,0) -- (4/12,-0.02);
	    \node[below=1pt of {(4/12,-0.02)}, scale= 0.75, outer                       sep=2pt,fill=white] {$a$};
            \draw[-,line width=1] (1/2,0) -- (1/2,-0.02);
	    \node[below=1pt of {(1/2,-0.02)}, scale= 0.75, outer                       sep=2pt,fill=white] {$\frac{1}{2}$};
            \draw[-,line width=1] (0, 4/12) -- (-0.02,4/12);
	    \node[left=1pt of {(-0.02, 4/12)}, scale= 0.75, outer                       sep=2pt,fill=white] {$a$};
	\end{tikzpicture}
    \end{subfigure}
    \caption{The diagonal $\delta_a^\uparrow$ (left panel) and the support of the corresponding 
    diagonal copula $E_{\delta_a^\uparrow}$ (right panel).}\label{Interpol1}
\end{figure}
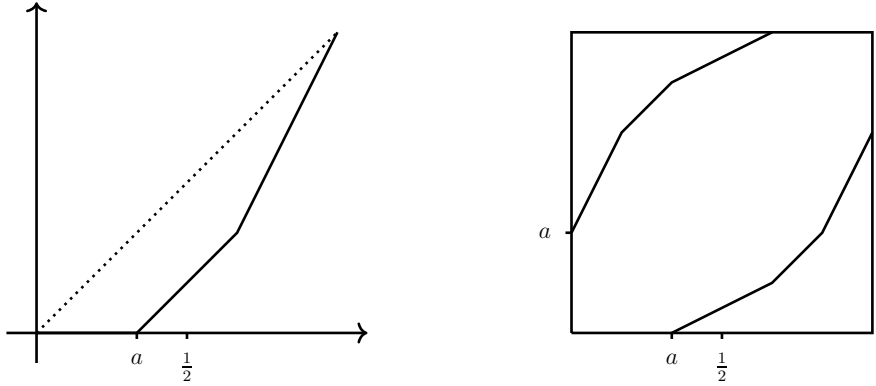

\begin{figure}[H]
    \centering
    \begin{subfigure}{.5\textwidth}
        \centering
        \begin{tikzpicture}[scale=4]
    	\draw[-,line width=1] (0,0) -- (1/4,0) -- (4/12, 2/12) -- (8/12,6/12) -- (9/12, 6/12) -- (1,1);
    	\draw[->,line width=1] (-0.1,0) -- (1.1,0);
            \draw[->,line width=1] (0,-0.1) -- (0,1.1);
            \draw[-,line width=1, dotted] (0,0) -- (1,1);
	    \draw[-,line width=1] (4/12,0) -- (4/12,-0.02);
	    \node[below=1pt of {(9/24,-0.02)}, scale= 0.75, outer sep=2pt] {$\frac{1}{4} + b$};
            \draw[-,line width=1] (1/4,0) -- (1/4,-0.02);
	    \node[below=1pt of {(1/4,-0.02)}, scale= 0.75, outer sep=2pt] {$\frac{1}{4}$};
        \end{tikzpicture}
    \end{subfigure}%
    \begin{subfigure}{.5\textwidth}
        \centering
        \begin{tikzpicture}[scale=4]
    	\draw[-,line width=1] (0,0) -- (0,1) -- (1,1) -- (1,0) -- (0,0);
            \draw[-,line width=1, black] (0,1/4) -- (1/12, 4/12) -- (1/4, 8/12);
            \draw[-,line width=1, black] (1/4,0) -- (4/12, 1/12) -- (8/12, 1/4);
            \draw[-,line width=1, black] (4/12,9/12) -- (8/12,11/12) -- (9/12, 1);
            \draw[-,line width=1, black] (9/12,4/12) -- (11/12,8/12) -- (1,9/12);
	    \draw[-,line width=1] (4/12,0) -- (4/12,-0.02);
	    \node[below=1pt of {(9/24,-0.02)}, scale= 0.75, outer sep=2pt] {$\frac{1}{4} + b$};
            \draw[-,line width=1] (1/4,0) -- (1/4,-0.02);
	    \node[below=1pt of {(1/4,-0.02)}, scale= 0.75, outer sep=2pt] {$\frac{1}{4}$};
     \draw[-,line width=1] (1/12,0) -- (1/12,-0.02);
	    \node[below=1pt of {(1/12,-0.02)}, scale= 0.75, outer sep=2pt,fill=white] {$b$};
	\end{tikzpicture}
    \end{subfigure}
    \caption{The diagonal $\delta_b^\downarrow$ (left panel) and the support of the induced diagonal copula $E_{\delta_b^\downarrow}$ (right panel).}\label{Interpol2}
\end{figure}
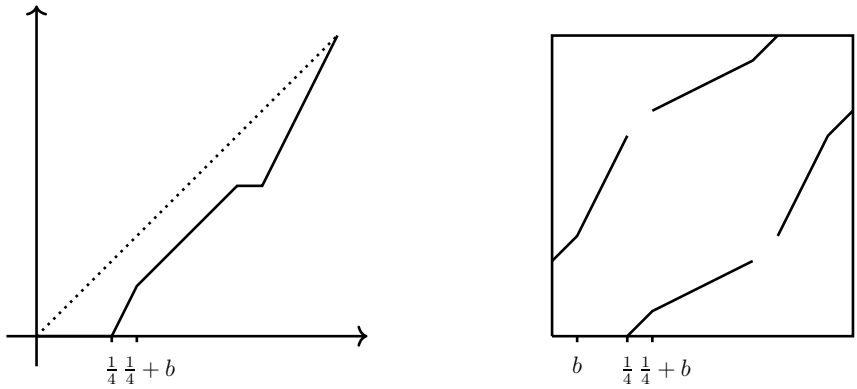
\noindent Note that $\delta_a^\uparrow$ coincides with the diagonal $\delta_a$ studied in Example 14 in 
\cite{bukovvsek2024exact} (and the copula $K_{\delta_a}$ considered there with the
diagonal copula $E_{\delta_a^\uparrow}$), so we obtain we obtain
we obtain
\begin{align*}
    \phi(E_{\delta_a^\uparrow}) &= 6a^2 - 6a + 1 \\
    \rho(E_{\delta_a^\uparrow}) &= 8a^3 - 6a + \frac{3}{2}
\end{align*}
as well as $\rho(E_{\delta_a^\uparrow}) = 2\phi(E_{\delta_a^\uparrow}) + \frac{1}{2} - \frac{\sqrt{3}}{9} (1 + 2\phi(E_{\delta_a^\uparrow}))^{3/2} $. Furthermore, tedious but simple calculations (see Appendix) yields  
\begin{align*}
    \phi(E_{\delta_b^\downarrow}) &= -6b^2 + 3b - \frac{1}{8}  \\
    \rho(E_{\delta_b^\downarrow}) &= 8b^3 - 12b^2 + \, \frac{9}{2}b + \frac{1}{8}
\end{align*}
which altogether implies $\rho(E_{\delta_b^\downarrow}) = \phi(E_{\delta_b^\downarrow}) + \frac{3}{8} - \frac{\sqrt{6}}{36} (1 - 4 \phi(E_{\delta_b^\downarrow}))^{3/2}.$

Hence, defining the concave function $h: [-\frac{1}{8}, \frac{1}{4}] \mapsto [0,1]$ by 
$$h(x) := \frac{3}{8} +  x - \frac{\sqrt{6}}{36} (1-4x)^{3/2}$$ 
we obtain $\rho(E_{\delta_b^\downarrow}) = h(\phi(E_{\delta_b^\downarrow}))$. Varying the values of $a \in [\frac{1}{4}, \frac{1}{2}]$ and $b \in [0, \frac{1}{4}]$, the copulas $E_{\delta_a^\uparrow}$ and $E_{\delta_b^\downarrow}$ yield an `interpolation' from the copula $S_2^\ast$ to $S_4^\ast$ in the following sense:
\begin{align*}
    S_2^\ast = E_{\delta_{1/2}^\uparrow} \overset{a \to 1/4}{\longrightarrow} E_{\delta_{1/4}^\uparrow} = E_{\delta_{0}^\downarrow} \overset{b\to1/4}{\longrightarrow} E_{\delta_{1/4}^\downarrow} = S_4^\ast
\end{align*}
Recall that for $x \in [-\frac{1}{8}, \frac{1}{4}]$ the graph of function $r$ is the linear interpolation 
between $(\phi(E_{\delta_{0}^\downarrow}), \rho(E_{\delta_{0}^\downarrow}))$ and $(\phi(S_4^\ast), \rho(S_4^\ast))$. 
Considering $\phi(E_{\delta_{0}^\downarrow}) = - \frac{1}{8}$ and $\phi(E_{\delta_{1/4}^\downarrow}) = \frac{1}{4}$, 
strict concavity of $h$ implies that $h(x) > r(x)$ for $x \in (-\frac{1}{8}, \frac{1}{4})$,
so the afore-mentioned construction exceeds $r$ on the interval $(-\frac{1}{8}, \frac{1}{4})$.

\subsection{General case: Interpolating between $S_n^\ast$ and $S_{n+2}^\ast$}
We show that for every $x \in (1 - \frac{3}{n}, 1 - \frac{3}{n+2})$ there exists some copula $C$ with $\phi(C)=x$
fulfilling that $\rho(C) > r(x)$. 
Doing so we again consider the copulas $E_a := E_{\delta_a^\uparrow}$ from the previous subsection, work with 
finite ordinal sums and start with quickly recalling their construction (for more information 
on ordinal sums and patchworks see \cite{durante2015principles} and the reference therein).

Suppose that $n \in \mathbb{N}$, that $(a_1, b_1), (a_2,b_2),\ldots,(a_n,b_n)$ are pairwise disjoint, non-degenerated
 intervals in $[0,1]$ and that $C_1,\ldots,C_n$ are copulas. Then the ordinal sum  
 $O = \left( \langle (a_k, b_k), C_k \rangle \right)_{k=1}^n$ is the copula defined by
 \begin{align*}
        O(u,v) := 
        \begin{cases}
        a_k + (b_k - a_k) C_k \left( \frac{u-a_k}{b_k - a_k}, \frac{v-a_k}{b_k - a_k} \right), &\text{if } 
        (u,v) \in (a_k, b_k)^2 \\
        M(u,v) &\text{elsewhere.}
    \end{cases}
    \end{align*}

It is straightforward to express $\phi(O)$ and $\rho(O)$ in terms of the corresponding values of the copulas
$C_k$. 

\begin{Lemma}\label{FormulasOrdinalSum}
    Suppose that $O = \left( \langle (a_k, b_k), C_k \rangle \right)_{k=1}^n$ is the ordinal sum of $C_1,\ldots,C_n$
    with respect to $(a_1, b_1), (a_2,b_2),\ldots,(a_n,b_n)$ . Then the following formulas hold:
    \begin{align*}
        \rho(O) &= 1 - \sum_{k=1}^N (b_k - a_k)^3 (1 - \rho(C_k)) \\
        \phi(O) &= \sum_{k = 1}^N \left( 6 a_k (b_k - a_k) + (b_k - a_k)^2(\phi(C_k) + 2) \right) - 2 
    \end{align*}
\end{Lemma}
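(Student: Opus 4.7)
The plan is to compute $\int_{[0,1]^2} O\, d\lambda_2$ and $\int_{[0,1]} O(u,u)\, d\lambda(u)$ directly from the piecewise definition of $O$ and then insert the results into \eqref{DefRho} and \eqref{DefPhi}. The key mechanism in both cases is the affine change of variables $u = a_k + (b_k-a_k)s$, $v = a_k + (b_k-a_k)t$ on each square $(a_k,b_k)^2$: it turns the local integrand into $a_k + (b_k-a_k)C_k(s,t)$, so that the local integral reduces to $\int_{[0,1]^2} C_k\, d\lambda_2 = (\rho(C_k)+3)/12$, and its one-dimensional analogue on the diagonal reduces to $\int_{[0,1]} C_k(s,s)\, d\lambda(s) = (\phi(C_k)+2)/6$, up to appropriate powers of $b_k-a_k$.

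For Spearman's $\rho$ I would use the decomposition
\begin{equation*}
\int_{[0,1]^2} O\, d\lambda_2 \;=\; \int_{[0,1]^2} M\, d\lambda_2 \;+\; \sum_{k=1}^n \int_{(a_k,b_k)^2} (O-M)\, d\lambda_2,
\end{equation*}
which is valid because $O$ coincides with $M$ outside $\bigcup_k (a_k,b_k)^2$. Applying the change of variables to both $O$ and $M$ on each square, the $a_k$-contributions cancel, and the Jacobian $(b_k-a_k)^2$ combines with the extra factor $(b_k-a_k)$ from the definition of $O$ to give a net contribution of $(b_k-a_k)^3(\rho(C_k)-1)/12$ per square. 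Combining with $\int_{[0,1]^2} M\, d\lambda_2 = 1/3$ and plugging into \eqref{DefRho} yields $\rho(O) = 1 - \sum_k (b_k-a_k)^3(1-\rho(C_k))$.

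For Spearman's footrule I would apply the same strategy along the diagonal:
\begin{equation*}
\int_{[0,1]} O(u,u)\, d\lambda(u) \;=\; \sum_{k=1}^n \int_{a_k}^{b_k} O(u,u)\, d\lambda(u) \;+\; \int_{[0,1]\setminus \bigcup_k (a_k,b_k)} u\, d\lambda(u),
\end{equation*}
using that $O(u,u) = u$ off the union of the subintervals. The one-dimensional substitution on each $(a_k,b_k)$ evaluates the $k$-th summand as
\begin{equation*}
\int_{a_k}^{b_k} O(u,u)\, d\lambda(u) \;=\; a_k(b_k-a_k) \;+\; (b_k-a_k)^2\,\frac{\phi(C_k)+2}{6}.
\end{equation*}
Multiplying by $6$, subtracting $2$, and noting that for the finite ordinal sums considered here the intervals $(a_k,b_k)$ partition $[0,1]$ up to a $\lambda$-null set (so the complementary integral contributes nothing after the multiplication/subtraction), the announced identity for $\phi(O)$ follows.

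The whole argument is entirely elementary: the only minor obstacle is the arithmetic bookkeeping needed to match the scaling factors $(b_k-a_k)^j$ precisely against the form stated in the lemma, and to verify that the contributions from $M$ on the off-diagonal regions $(a_k,b_k)\times (a_j,b_j)$ for $j\neq k$ (in the $\rho$ computation) indeed combine with $\int_{[0,1]^2} M\, d\lambda_2 = 1/3$ to give the constant $1$ appearing in the formula.
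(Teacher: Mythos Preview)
Your proposal is correct and, for the $\phi$ identity, follows exactly the paper's route: the one-dimensional substitution $u = a_k + (b_k-a_k)s$ on each $(a_k,b_k)$ turning $\int_{(a_k,b_k)} O(u,u)\,d\lambda(u)$ into $a_k(b_k-a_k) + (b_k-a_k)^2(\phi(C_k)+2)/6$, together with the (tacit) assumption that the intervals partition $[0,1]$. For the $\rho$ identity the paper simply cites \cite{bukovvsek2024exact}, whereas you supply an explicit argument via the decomposition $\int O = \int M + \sum_k \int_{(a_k,b_k)^2}(O-M)$; this is a mild bonus, since your version makes transparent that the $\rho$ formula---unlike the $\phi$ formula---does not require the intervals to partition $[0,1]$.
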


\begin{proof}
    The first identity was already shown in \cite{bukovvsek2024exact}. Concerning the second one, using change of coordinates, 
    yields
    \begin{align*}
        \frac{\phi(C) + 2}{6} &= \int_{[0,1]} C(u,u) d \lambda(u) \\&=
         \sum_{k=1}^N a_k(b_k - a_k) +  \int_{[a_k, b_k]} (b_k - a_k) C_k \left( \frac{u-a_k}{b_k - a_k}, \frac{u-a_k}{b_k - a_k} \right) d \lambda(u) \\&=
         \sum_{k=1}^N a_k(b_k - a_k) + (b_k - a_k)^2 \int_{[0, 1]}  C_k (u,u) d \lambda(u) \\&=
         \sum_{k=1}^N a_k(b_k - a_k) + (b_k - a_k)^2 \frac{\phi(C_k) + 2}{6}.
    \end{align*}
    This completes the proof.
   \end{proof}

Suppose now that $n \geq 4$ is even and set $N = n/2$. Then according to Lemma \ref{FormulasOrdinalSum} for the 
ordinal sum $O_a^N := \left( \langle ( \frac{k-1}{N},  \frac{k}{N}), E_a \rangle \right)_{k=1}^N$ we get  
\begin{align*}
    \rho(O_a^N) &= 1 - \frac{1}{N^2} (1 - \rho(E_a)) \\
    \phi(O_a^N) &= 1 - \frac{1}{N} (1 - \phi(E_a))
\end{align*}
In order to `interpolate' between $S_n^\ast$ and $S_{n+2}^\ast$ we will work with the ordinal sum 
$O_{a_N}^N$ with $a_N := \frac{N}{2N+2} \in (\frac{1}{4}, \frac{1}{2})$. The following result holds: 

\begin{Lemma}\label{PhiRhoRegionImprovement}
    For every even $n\geq4$ we have $\rho(O_{a_N}^N) > r(\phi(O_{a_N}^N))$.
\end{Lemma}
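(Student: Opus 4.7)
The proof will be a direct computation, since every quantity in the statement admits a closed form. The plan is to evaluate $\phi(O_{a_N}^N)$, $\rho(O_{a_N}^N)$, and $r(\phi(O_{a_N}^N))$ explicitly, and show that the difference $\rho(O_{a_N}^N) - r(\phi(O_{a_N}^N))$ simplifies to a (small but) strictly positive rational function of $N$.

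First I would substitute $a_N = \frac{N}{2(N+1)}$ into the already established formulas $\phi(E_a) = 6a^2 - 6a + 1$ and $\rho(E_a) = 8a^3 - 6a + \tfrac{3}{2}$, obtaining, after a short calculation,
\begin{equation*}
\phi(E_{a_N}) = \frac{-N^2 - 2N + 2}{2(N+1)^2}, \qquad \rho(E_{a_N}) = \frac{-N^3 - 3N^2 + 3N + 3}{2(N+1)^3}.
\end{equation*}
Feeding these into the ordinal-sum identities $\phi(O_a^N) = 1 - \tfrac{1-\phi(E_a)}{N}$ and $\rho(O_a^N) = 1 - \tfrac{1-\rho(E_a)}{N^2}$ derived just before the lemma yields
\begin{equation*}
\phi(O_{a_N}^N) = 1 - \frac{3(N+2)}{2(N+1)^2}, \qquad \rho(O_{a_N}^N) = 1 - \frac{3N^3 + 9N^2 + 3N - 1}{2N^2 (N+1)^3}.
\end{equation*}

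Before evaluating $r$, I would verify that $\phi(O_{a_N}^N) \in [1 - \tfrac{3}{2N}, 1 - \tfrac{3}{2(N+1)}]$, so that the relevant linear piece $r(x) = \frac{2N+1}{N(N+1)}\, x + \frac{2N^2 - 2N + 1}{2N(N+1)}$ of the piecewise definition applies. This amounts to the trivial inequalities $N(N+2) \leq (N+1)^2$ and $N+1 \leq N+2$. Substituting $\phi(O_{a_N}^N)$ into this affine function and bringing everything to the common denominator $2N^2(N+1)^3$, both $\rho(O_{a_N}^N)$ and $r(\phi(O_{a_N}^N))$ acquire integer polynomial numerators of degree five in $N$.

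A careful expansion then reveals that these numerators agree in every coefficient except in the constant term, so that all cross-terms cancel and one is left with
\begin{equation*}
\rho(O_{a_N}^N) - r(\phi(O_{a_N}^N)) = \frac{1}{2N^2 (N+1)^3} > 0,
\end{equation*}
which establishes the claim. The main obstacle here is purely arithmetic bookkeeping rather than any conceptual difficulty; the pleasantly surprising feature is that the margin by which $\rho(O_{a_N}^N)$ exceeds $r(\phi(O_{a_N}^N))$ collapses to a single $1$ in the numerator, making the improvement over the best previously known lower estimate visible but tight.
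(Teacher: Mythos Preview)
Your proposal is correct and follows essentially the same route as the paper's own proof: both compute $\phi(O_{a_N}^N)$ and $\rho(O_{a_N}^N)$ explicitly from the ordinal-sum formulas, verify that $\phi(O_{a_N}^N)$ lies in the interval $[1-\tfrac{3}{2N},\,1-\tfrac{3}{2(N+1)}]$ so that the appropriate linear piece of $r$ applies, and then reduce the difference to $\rho(O_{a_N}^N)-r(\phi(O_{a_N}^N))=\tfrac{1}{2N^2(N+1)^3}>0$. Your intermediate expressions (e.g.\ $\phi(O_{a_N}^N)=1-\tfrac{3(N+2)}{2(N+1)^2}$) are just algebraic rewritings of the paper's (e.g.\ $\phi(O_{a_N}^N)=\tfrac{2N^2+N-4}{2(N+1)^2}$), so there is no substantive difference.
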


\begin{proof}
    As before we set $N = n/2$. 
    According to \cite{bukovvsek2024exact}, on the compact interval $[1 - \frac{3}{2N}, 1 - \frac{3}{2(N+1)}]$ the 
        function $r$ is given by 
        $r(x) = \frac{2N+1}{N^2 + N} x + \frac{2N^2 - 2N + 1}{2(N^2+N)}$.  \\
    Considering 
    \begin{align*}
        \phi(O_{a_N}^N) &= 1 - \frac{1}{N} \left( 1 - (6a_N^2 - 6a_N + 1)  \right) \\&=
        1 - \frac{6}{N}a_N(1-a_N) = \frac{2N^2 + N - 4}{2(N+1)^2}
    \end{align*}
    it follows that
    \begin{align*}
        1 - \frac{3}{2(N+1)} - \phi(O_{a_N}^N) = \frac{3}{2(N + 1)^2} \geq 0
    \end{align*}
    and
    \begin{align*}
        \phi(O_{a_N}^N) - (1 - \frac{3}{2N}) = \frac{3}{2N(N + 1)^2} \geq 0,
    \end{align*}
    which directly yields $ \phi(O_{a_N}^N) \in [1 - \frac{3}{2N}, 1 - \frac{3}{2(N+1)}]$. 
    Furthermore a straightforward calculation shows 
    \begin{align*}
        r(\phi(O_{a_N}^N)) &= \frac{2N^4 + 6N^3 + 3N^2 - 7N - 3}{2N(N+1)^3} \\
        \rho(O_{a_N}^N) &= \frac{2N^5 + 6N^4 + 3N^3 - 7N^2 - 3N + 1}{2N^2(N+1)^3}
    \end{align*}
    implying $\rho(O_{a_N}^N) - r(\phi(O_{a_N}^N)) = \frac{1}{2N^2(N+1)^3} > 0$. 
\end{proof}

\noindent Summing up, and using convexity of $\Omega_{\phi,\rho}$ 
we have shown the following result for the function $s:[-\frac{1}{2}, 1] \to [-1,1]$, defined by
\begin{align*}
    s(x) = 
    \begin{cases}
        2x + 1/2 - \frac{\sqrt{3}}{9} (1+2x)^{3/2}, &\text{if } x \in [-\frac{1}{2}, -\frac{1}{8}], \\[5pt]
        x + \frac{3}{8} - \frac{\sqrt{6}}{36} (1-4x)^{3/2}, &\text{if } x \in [-\frac{1}{8}, \frac{1}{4}], \\[5pt]
        \frac{\rho(O_{a_N}^N) - \rho(S_{2N}^\ast)}{\phi(O_{a_N}^N) - \phi(S_{2N}^\ast)} (x - \phi(S_{2N}^\ast)) + \rho(S_{2N}^\ast) , &\text{if } x \in [\phi(S_{2N}^\ast), \phi(O_{a_N}^N)] \text{ }, N \geq 2, \\[7.5pt]
        \frac{\rho(S_{2N+2}^\ast) - \rho(O_{a_N}^N)}{\phi(S_{2N+2}^\ast) - \phi(O_{a_N}^N)} (x - \phi(O_{a_N}^N)) + \rho(O_{a_N}^N), &\text{if } x \in [\phi(O_{a_N}^N), \phi(S_{2N+2}^\ast)] \text{ }, N, \geq 2\\[7.5pt]
        1, &\text{ if } x = 1.
    \end{cases}
\end{align*}
\begin{Theorem}
For every $x \in (-\frac{1}{8},1)$ with $x \not \in \{1-\frac{3}{2N}: N \geq 2\}$ the function $s$ fulfills 
$s(x)>r(x)$ and there exists some copula $C_x$ fulfilling $\phi(C_x)=x$ and $\rho(C_x)=s(x)$. 
\end{Theorem}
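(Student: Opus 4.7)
The plan is to split the interval $(-\tfrac18,1)\setminus\{1-\tfrac{3}{2N}:N\geq 2\}$ into two regimes and handle each separately, relying on the constructions already introduced in Sections 5.1 and 5.2.

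For the first regime $x\in(-\tfrac18,\tfrac14)$, I would use the family $E_{\delta_b^\downarrow}$ with $b\in[0,\tfrac14]$. As computed in Section 5.1, $\phi(E_{\delta_b^\downarrow})=-6b^2+3b-\tfrac18$ is continuous in $b$ and sweeps $[-\tfrac18,\tfrac14]$ as $b$ ranges over $[0,\tfrac14]$, so the intermediate value theorem gives a $b_x$ with $\phi(E_{\delta_{b_x}^\downarrow})=x$. Setting $C_x:=E_{\delta_{b_x}^\downarrow}$ yields $\rho(C_x)=h(x)=s(x)$ by the closed-form $\rho(E_{\delta_b^\downarrow})=h(\phi(E_{\delta_b^\downarrow}))$ derived there. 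To obtain $s(x)>r(x)$, observe that $h$ is strictly concave on $[-\tfrac18,\tfrac14]$ (immediate from $h''(x)=-\tfrac{\sqrt 6}{3}(1-4x)^{-1/2}<0$) and, as verified in the excerpt (and easy to double-check by plugging in), $h$ agrees with the linear function $r$ at the two endpoints $x=-\tfrac18$ and $x=\tfrac14$; strict concavity forces $h>r$ strictly inside.

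For the second regime, fix $N\geq 2$ and consider $x\in(\phi(S_{2N}^\ast),\phi(S_{2N+2}^\ast))$. The key inputs are: (i) both endpoints lie on the sharp upper bound, so $\rho(S_{2N}^\ast)=r(\phi(S_{2N}^\ast))$ and $\rho(S_{2N+2}^\ast)=r(\phi(S_{2N+2}^\ast))$, and on $[\phi(S_{2N}^\ast),\phi(S_{2N+2}^\ast)]$ the function $r$ is the straight line joining these two points; (ii) Lemma \ref{PhiRhoRegionImprovement} gives $\rho(O_{a_N}^N)>r(\phi(O_{a_N}^N))$ with $\phi(O_{a_N}^N)$ strictly between the two endpoints. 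Since $\phi$ and $\rho$ are linear functionals on the convex set $\C$, the region $\Omega_{\phi,\rho}$ is convex; hence every convex combination of $(\phi(S_{2N}^\ast),\rho(S_{2N}^\ast))$, $(\phi(O_{a_N}^N),\rho(O_{a_N}^N))$, $(\phi(S_{2N+2}^\ast),\rho(S_{2N+2}^\ast))$ belongs to $\Omega_{\phi,\rho}$. Writing $x$ as the appropriate convex combination of two consecutive anchor points, I take $C_x$ as the same convex combination of the associated copulas; by linearity this gives $\phi(C_x)=x$ and $\rho(C_x)=s(x)$. Strict positivity of $s(x)-r(x)$ on the open interval follows because the piecewise linear $s$ matches the linear $r$ at the two outer anchors while exceeding it at the middle anchor, so the upper envelope of the two segments of $s$ dominates $r$ strictly away from the outer anchors.

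I expect the only technical wrinkle to be making sure the case distinction is exhaustive and the exceptional set is tight: the points $x=1-\tfrac{3}{2N}$ are exactly the sharp upper-bound points $\phi(S_{2N}^\ast)$ where $s(x)=r(x)$, and these must be excluded for the strict inequality. Everything else is routine once convexity of $\Omega_{\phi,\rho}$ (trivial from linearity of $\phi$ and $\rho$) and the numerics already tabulated in Section 5 are invoked; no new copula construction is required beyond the diagonal copulas $E_{\delta_b^\downarrow}$ and the ordinal sums $O_{a_N}^N$.
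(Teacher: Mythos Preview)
Your proposal is correct and follows exactly the route the paper intends: the theorem is stated in the paper as a direct summary (``Summing up, and using convexity of $\Omega_{\phi,\rho}$ \ldots'') of Sections~5.1 and~5.2, and you have simply filled in the details---the concavity argument for $h$ on $(-\tfrac18,\tfrac14)$ is already in Section~5.1, and on each interval $(1-\tfrac{3}{2N},1-\tfrac{3}{2(N+1)})$ your piecewise-linear chord argument through the anchor $(\phi(O_{a_N}^N),\rho(O_{a_N}^N))$ is precisely what convexity of $\Omega_{\phi,\rho}$ together with Lemma~\ref{PhiRhoRegionImprovement} delivers. One small wording point: writing ``both endpoints lie on the sharp upper bound, so $\rho(S_{2N}^\ast)=r(\phi(S_{2N}^\ast))$'' conflates the curve $1-\tfrac23(1-x)^2$ with $r$; what you actually need (and what holds by the very definition of $r$) is that the linear piece of $r$ on $[1-\tfrac{3}{2N},1-\tfrac{3}{2(N+1)}]$ is the chord joining $(\phi(S_{2N}^\ast),\rho(S_{2N}^\ast))$ and $(\phi(S_{2N+2}^\ast),\rho(S_{2N+2}^\ast))$.
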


We conjecture that the upper bound $s$ is not best possible, i.e., that for every 
$x \in (-\frac{1}{8},1)$ with $x \not \in \{1-\frac{3}{2N}: N \geq 2\}$ there exists some copula 
$B_x$ fulfilling $\phi(B_x)=x$ and $\rho(B_x)>s(x)$, so deriving a globally sharp upper inequality
is still an open question.

\section*{Acknowledgements}
The first and the second author gratefully acknowledge the financial support from AMAG Austria Metall AG within the 
project 'ProSa'. The third author gratefully acknowledges the support of the WISS 2025 project `IDA-lab Salzburg'
 (20204-WISS/225/197-2019 and 20102-F1901166-KZP).





\clearpage


\section*{Appendix: Complementary calculations for Section \ref{SecImprovementUpperBound}}
In the sequel we derive the formulas for 
$\phi(E_{\delta_b^\downarrow})$ and $\rho(E_{\delta_b^\downarrow})$. The definition of $\delta_b^\downarrow$ implies
\begin{align*}
    \int_{[0,1]} \delta_b^\downarrow(u) d\lambda(u) &= \int_{[1/4, 1/4+b]} 2( u - \tfrac{1}{4}) d\lambda(u) + \int_{[1/4+b, 3/4 - b]} u + b - \tfrac{1}{4} d \lambda(u) \\& + \int_{[3/4 - b, 3/4]} \tfrac{1}{2} d\lambda(u) + \int_{[3/4, 1]} 2 u - 1 d \lambda(u) \\&= b^2 + \tfrac{1}{8} - 2b^2 + \tfrac{1}{2} b + \tfrac{3}{16},
\end{align*}
so $\phi(E_{\delta_b^\downarrow}) = -6 b^2 + 3b - \tfrac{1}{8}$. The diagonal copula $E_{\delta_b^\downarrow}$ distributes its mass on the function $h_b$ depicted in Figure \ref{AppendixInterpolation}, we therefore  obtain
\begin{align*}
    \int_{[0,1]} \Pi(u, h_b(u)) d\lambda(u) &= 
    \int_{[0,b]} u(u+\tfrac{1}{4}) d\lambda(u) + \int_{[b,1/4]} u(2u + \tfrac{1}{4} - b) d \lambda(u) 
    \\&\hspace{-2.5cm} + \int_{[1/4, 1/4+b]} u(u-\tfrac{1}{4}) d \lambda(u) 
    + \tfrac{1}{2} \int_{[1/4+b, 3/4-b]} u(\tfrac{1}{2}u + \tfrac{5}{8} - \tfrac{1}{2}b) d\lambda(u) 
    \\&\hspace{-2.5cm} + \tfrac{1}{2} \int_{[1/4+b, 3/4-b]} u(\tfrac{1}{2}u - \tfrac{1}{8} + \tfrac{1}{2}b) d\lambda(u) +
    \int_{[3/4-b, 3/4]} u(u+\tfrac{1}{4}) d\lambda(u)
    \\&\hspace{-1cm} + \int_{[3/4, 1 -b]} u (2u - \tfrac{5}{4} + b) d\lambda(u) + \int_{[1-b, 1]} u(u-\tfrac{1}{4}) d\lambda(u) \\&=
    \tfrac{1}{12} b^2 (8b +3) + \tfrac{1}{192}(-64b^3 - 48b^2 - 12b + 7) + \\&\hspace{-1cm} \tfrac{1}{192}(-64b^3 + 144b^2 - 204b + 43) 
    + \tfrac{1}{12} b (8b^2 - 21b + 18) 
    \\&= \tfrac{1}{96}( 64b^3 - 96b^2 + 36b + 25).
\end{align*}
Altogether, this yields $\rho(E_{\delta_b^\downarrow}) = 8b^3 - 12b^2 + \tfrac{9}{2}b + \tfrac{1}{8}$. 
Finally, considering $\tfrac{8}{3} \phi(E_{\delta_b^\downarrow}) - \tfrac{2}{3} = - (1 - 4b)^2$ and 
$\tfrac{1}{8} \left( \tfrac{2}{3} - \tfrac{8}{3} \phi(E_{\delta_b^\downarrow}) \right)^{3/2} = \phi(E_{\delta_b^\downarrow}) - \rho(E_{\delta_b^\downarrow}) + \tfrac{3}{8}$ it follows that 
\begin{align*}
    \rho(E_{\delta_b^\downarrow}) &= \phi(E_{\delta_b^\downarrow}) + \tfrac{3}{8} - \tfrac{1}{8} \left( \tfrac{2}{3} - \tfrac{8}{3} \phi(E_{\delta_b^\downarrow}) \right)^{3/2} \\&= \phi(E_{\delta_b^\downarrow}) + \tfrac{3}{8} - \tfrac{\sqrt{6}}{36} \left( 1 - 4 \phi(E_{\delta_b^\downarrow}) \right)^{3/2}.
\end{align*}

\begin{figure}[H]
    \centering
    \begin{tikzpicture}[scale=9]
    	\draw[-,line width=1] (0,0) -- (0,1) -- (1,1) -- (1,0) -- (0,0);
            
            \draw[-,line width=1, black] (0,1/4) -- (1/12, 4/12) -- (1/4, 8/12);
            \draw[-,line width=1, black] (1/4,0) -- (4/12, 1/12) -- (8/12, 1/4);
            \draw[-,line width=1, black] (4/12,9/12) -- (8/12,11/12) -- (9/12, 1);
            \draw[-,line width=1, black] (9/12,4/12) -- (11/12,8/12) -- (1,9/12);
            
            \draw[-,line width=1] (1/12,0) -- (1/12,-0.02);
	    \node[below=1pt of {(1/12,-0.02)}, scale= 0.75, outer sep=2pt,fill=white] {$b$};
            \draw[-,line width=1] (1/4,0) -- (1/4,-0.02);
	    \node[below=1pt of {(1/4,-0.02)}, scale= 0.75, outer sep=2pt] {$\frac{1}{4}$};
            \draw[-,line width=1] (4/12,0) -- (4/12,-0.02);
	    \node[below=1pt of {(8/24,-0.02)}, scale= 0.75, outer sep=2pt] {$\frac{1}{4} + b$};
            \draw[-,line width=1] (8/12,0) -- (8/12,-0.02);
	    \node[below=1pt of {(8/12,-0.02)}, scale= 0.75, outer sep=2pt] {$\frac{3}{4} - b$};
            \draw[-,line width=1] (3/4,0) -- (3/4,-0.02);
            \node[below=1pt of {(3/4,-0.02)}, scale= 0.75, outer sep=2pt] {$\frac{3}{4}$};
            \draw[-,line width=1] (11/12,0) -- (11/12,-0.02);
	    \node[below=1pt of {(11/12,-0.02)}, scale= 0.75, outer sep=2pt] {$1 - b$};
            
            \draw[-,line width=1] (0,1/4) -- (-0.02,1/4);
	    \node[left=1pt of {(-0.02,1/4)}, scale= 0.75, outer sep=2pt,fill=white] {$\frac{1}{4}$};
            \draw[-,line width=1] (0,4/12) -- (-0.02,4/12);
	    \node[left=1pt of {(-0.02,4/12)}, scale= 0.75, outer sep=2pt,fill=white] {$\frac{1}{4} + b$};
            \draw[-,line width=1] (0,8/12) -- (-0.02,8/12);
	    \node[left=1pt of {(-0.02,8/12)}, scale= 0.75, outer sep=2pt,fill=white] {$\frac{3}{4} - b$};
            \draw[-,line width=1] (0,9/12) -- (-0.02,9/12);
	    \node[left=1pt of {(-0.02,9/12)}, scale= 0.75, outer sep=2pt,fill=white] {$\frac{3}{4}$};
            \draw[-,line width=1] (0,11/12) -- (-0.02,11/12);
	    \node[left=1pt of {(-0.02,11/12)}, scale= 0.75, outer sep=2pt,fill=white] {$1 - b$};

            \node[below=1pt of {(1/24,7/24)}, scale= 0.75, outer sep=2pt,fill=white, rotate=45] {$u + \frac{1}{4}$};
            \node[below=1pt of {(2/12,6/12)}, scale= 0.75, outer sep=2pt,fill=white, rotate=61] {$2u + \frac{1}{4} - b$};
            \node[below=1pt of {(11/48,5/48)}, scale= 0.75, outer sep=2pt,fill=white, rotate=45] {$u - \frac{1}{4}$};
            \node[below=1pt of {(21/48,11/48)}, scale= 0.75, outer sep=2pt,fill=white, rotate=27] {$\frac{1}{2}u - \frac{1}{8} + \frac{1}{2} b$};
            \node[below=1pt of {(9/12,6/12)}, scale= 0.75, outer sep=2pt,fill=white, rotate=61] {$2u - \frac{5}{4} + b$};
            \node[below=1pt of {(21/48,77/96)}, scale= 0.75, outer sep=2pt,fill=white, rotate=27] {$\frac{1}{2}u + \frac{5}{8} - \frac{1}{2} b$};
            \node[below=1pt of {(87/96,37/48)}, scale= 0.75, outer sep=2pt,fill=white, rotate=45] {$u - \frac{1}{4}$};
            \node[below=1pt of {(69/96,93/96)}, scale= 0.75, outer sep=2pt,fill=white, rotate=45] {$u + \frac{1}{4}$};
	\end{tikzpicture}
    \caption{Mass distribution of the diagonal copula $E_{\delta_b^\downarrow}$ on the graph of $h_b$.}\label{AppendixInterpolation}
\end{figure}
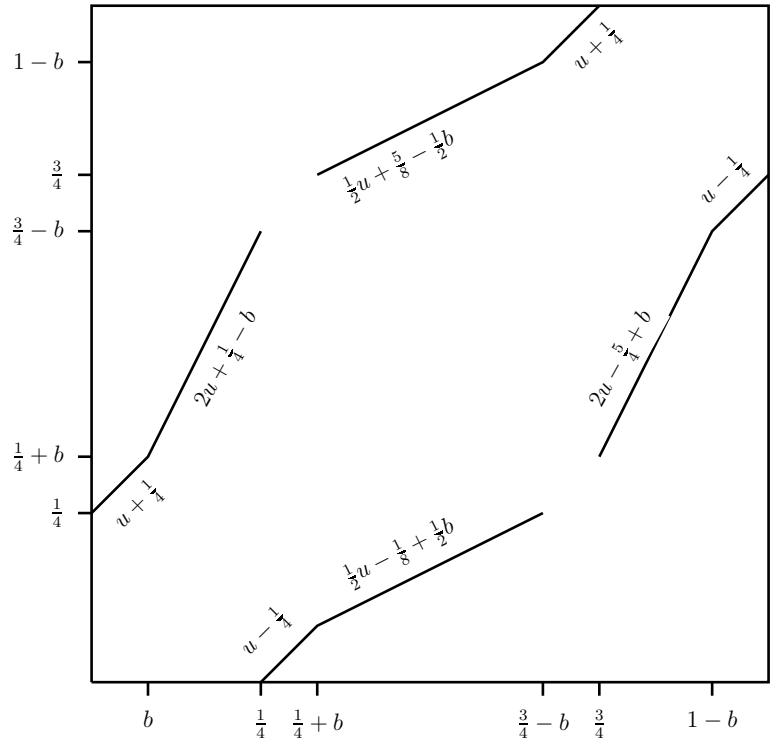

\end{document}